\documentclass[11pt,a4paper,english,reqno]{amsart}
\usepackage{babel}
\usepackage[latin1]{inputenc}
\usepackage{amsmath}
\usepackage{amsthm}
\usepackage{amsfonts}
\usepackage{indentfirst}
\usepackage{graphicx}
\usepackage{mathrsfs}

\usepackage{amssymb}

\evensidemargin 3ex \oddsidemargin 3ex

\textwidth=15cm


\newtheorem{teo}{Theorem}[section]
\newtheorem{de}[teo]{Definition}
\newtheorem{pro}[teo]{Proposition}

\newtheorem{rem}[teo]{Remark}
\newtheorem{lem}[teo]{Lemma}
\newtheorem{exam}[teo]{Example}
\newtheorem{alg}[teo]{Algorithm}

\newtheorem*{theorem*}{Theorem}


\newcommand{\gp}{\mathbb{P}}

\renewcommand{\int}{{\rm int}}

\usepackage{color}

\newcommand{\C}{\ensuremath{\mathbb{C}}}
\newcommand{\CP}{\mathbb{C}\mathbb{P}}
\newcommand{\N}{\ensuremath{\mathbb{N}}}
\newcommand{\Q}{\ensuremath{\mathbb{Q}}}

\newcommand{\X}{\ensuremath{\mathbf{X}}}
\newcommand{\la}{\lambda}

\newcommand{\pd}[2]{\frac{\partial #1}{\partial #2}}

\title[On the computation of Darboux first integrals]{On the computation of Darboux first integrals of a class of planar polynomial vector fields}
\author{A.~Ferragut, C.~Galindo \and F.~Monserrat}
\address{A. Ferragut and C. Galindo: Institut Universitari de Matem\`atiques i Aplicacions de Castell\'o (IMAC) and Departament de Matem\`{a}tiques, Universitat Jaume I, Edifici TI (ESTCE), Av. de Vicent Sos Baynat, s/n, Campus del Riu Sec, 12071 Castell\'{o} de la Plana, Spain} \email{ferragut@uji.es, galindo@uji.es}
\address{F. Monserrat: E.T.S. d'Inform\`atica Aplicada, Universitat Polit\`ecnica de Val\`encia, Cam\'\i\ de Vera, s/n, 46002 Val\`encia, Spain} \email{framonde@mat.upv.es}
\date{}
\thanks{Partially supported by the Spanish Government Ministerio de Econom\'ia, Industria y Competitividad (MINECO), grants  MTM2015-65764-C3-2-P, MTM2016-81735-REDT, MTM2016-81932-REDT and MTM2016-77278-P, as well as by Universitat Jaume I grants P1-1B2015-02 and P1-1B2015-16.}
\subjclass[2010]{34A34; 34C05; 34C08; 14C21}
\keywords{Planar polynomial vector field, Darboux first integral, reduction of singularities, curve with only one place at infinity}

\begin{document}

\begin{abstract}
We study the class of planar polynomial vector fields admitting Darboux first integrals of the type $\prod_{i=1}^r f_i^{\alpha_i}$, where the $\alpha_i$'s are positive real numbers and the $f_i$'s are polynomials defining curves with only one place at infinity. We show that these vector fields have an extended reduction procedure and give an algorithm which, from a part of the extended reduction of the vector field, computes a Darboux first integral for generic exponents.
\end{abstract}

\maketitle

\section{Introduction}
Complex planar polynomial differential systems are being studied since the 19th century when Darboux \cite{dar}, Poincar\'e \cite{poi1,poi2}, Painlev\'e \cite{pai} and Autonne \cite{aut} significantly contributed to this topic. Surprisingly, nowadays, the problem of characterizing integrable differential systems as above remains open. To compute a first integral is a very interesting issue because this function provides the solution curves of the system within their domain of definition, determining the phase portrait of the system.

The Darboux theory of integrability of planar differential systems (as presented for example in \cite{DLA}) considers a remarkable family of multi-valued functions, named Darboux functions, which have the following shape:
\begin{equation}
\label{darboux}
H:= \prod_{i=1}^p f_i^{\lambda_i} \prod_{j=1}^q \exp \left(\frac{h_j}{g_j}\right)^{\mu_j},
\end{equation}
where $f_i$, and $g_j$ and $h_j$ are bivariate complex polynomials and $\lambda_i$ and $\mu_j$ complex numbers. The Darboux theory  states that if a system has $p$ invariant algebraic curves with equations $f_i=0$ with cofactors $k_i$, $i \leq i \leq p$, and $q$ exponential factors $\exp \left(\frac{h_j}{g_j}\right)$ with cofactors $\ell_j$, $1 \leq j\leq q$, such that $\sum_{i=1}^p \lambda_i k_i + \sum_{j=1}^q \mu_j \ell_j =0$ for some complex numbers $\{\lambda_i\}_{i=1}^p$ and $\{\mu_j\}_{j=1}^q$, not all zero, then the function $H$ given in (\ref{darboux}) is a first integral of the differential system.

A particular and desirable type of Darboux functions are rational functions because when $H=f/g$ is a first integral, all the invariant curves of the system are algebraic and are determined from the equations $\lambda f + \beta g=0$, where the pair $(\lambda: \beta)$ runs over the complex projective line. Poincar\'e in \cite{poi2} observed that ``to find out whether a differential equation of the first order and the first degree is algebraically integrable, it is enough to get an upper bound of the degree of the integral". This observation gave rise to the so-called Poincar\'e problem, which looks for a bound of the degree of the first integral in terms of the degree of the polynomial system, and generated a lot of literature.  Although it is well-known that this upper bound does not exist in general, in some cases it can be computed. 
For instance, when the singularities of the corresponding vector field are non-degenerated \cite{poi2}, when they are of nodal type \cite{ce-li} or when the reduction of the before mentioned vector field admits only one non-invariant exceptional divisor \cite{g-m-4}. Notice also that, when a bound of the degree of the rational first integral is known, efficient algorithms to compute that integral have been described \cite{FG2010,bos}.

Recently in \cite{FGM}, the authors considered a family of planar polynomial differential systems $\mathscr{F}$ formed by those systems admitting a polynomial first integral which factorizes as a product of bivariate polynomials, each of them defining a curve with only one place at infinity. These first integrals were called well-behaved at infinity, WAI for short. A plane curve has only one place at infinity when it meets the line at infinity in a unique point where it is reduced and unibranched. Abhyankar and Moh \cite{pa1,pa2,am} introduced these curves. They have a very good local-global behavior, which makes them useful when studying some algebraic and geometric problems \cite{c-p-r-1,c-p-r-2,F-J-ein,F-J-ann,g-m-3}. In addition, many interesting (but hard to compute) tools, introduced for improving the knowledge of the algebraic varieties, are much easier to describe when one considers surfaces having a close relation with curves with only one place at infinity \cite{xie,g-m-adv,mondal,g-m-m}.

Returning to the family $\mathscr{F}$ of differential systems, we proved in \cite{FGM} that when a vector field, or equivalently a $1$-form $\omega$, corresponds to a system $S$ in $\mathscr{F}$, then the degree of the polynomial first integral can be bounded from the knowledge of a part of the Seidenberg reduction of $\omega$. Furthermore, from this reduction, we are able to decide whether $S$ belongs to $\mathscr{F}$ or not and, in the affirmative case, to compute the corresponding first integral. This solves the problem of deciding if a system has a polynomial first integral given by (natural) powers of curves with only one place at infinity.

We want to study whether a similar procedure can be performed for families of polynomial differential systems having a non-polynomial Darboux first integral defined by curves with only one place at infinity. So, in this paper, we consider a new family $\mathscr{D}$ of planar polynomial differential systems having what we call a Darboux positive well-behaved at infinity (DPWAI) first integral. This family satisfies $\mathscr{D} \cap \mathscr{F} = \emptyset$ and roughly speaking (see Definition \ref{DPWAI} for the precise concept) a DPWAI function is a (multi-valued) function $H= \prod_{i=1}^r f_i^{\alpha_i}$, where $f_i$, $1 \leq i \leq r$, are bivariate polynomials defining plane curves $C_i$ with only one place at infinity and satisfying that each one of them does not belong to the pencil at infinity defined by any other. In addition, the values $\alpha_i$, $1 \leq i \leq r$, are positive real numbers satisfying a certain condition which holds when they are linearly independent over the rational numbers.

The main result in this paper is an algorithm whose input is a differential system $\mathbf{X}$ and whose output is either a first integral of $\mathbf{X}$ or ``0''.  The output is a DPWAI first integral when $\mathbf{X}$ belongs to $\mathscr{D}$ and its first integral has generic exponents (see Definition \ref{generic}); if the output is ``0'' then either $\mathbf{X}$ does not belong to $\mathscr{D}$, or it belongs to $\mathscr{D}$ but the exponents of the first integral are not generic. Vector fields admit a (possibly infinite) extended reduction of singularities (see Definition \ref{extendida}). This extended reduction shows that simple singularities whose quotient of eigenvalues is a positive irrational number can be ``simplified" by an infinite sequence of point blowing-ups and represented by means of a proximity graph (see Subsection \ref{proximitygraph} and Section \ref{extendedre}).  Our algorithm uses a (finite) part of that extended reduction and, also,  when $\mathbf{X}$ belongs to $\mathscr{D}$ and its first integral has generic exponents, determines the complete extended reduction over the line at infinity. Notice that, with this algorithm, we are able to compute much more Darboux first integrals (which are not rational) than in \cite{FGM}.


The algorithm has two steps. The first one,  Algorithm \ref{alg1}, uses the mentioned part of the extended reduction to get candidates to polynomials defining the invariant curves $C_i$, and the second one computes the exponents $\alpha_i$ by using Darboux theory of integrability (Theorem \ref{tDar}). We think that, in practice, our algorithm works for any system in $\mathscr{D}$; however, due to our algebraic techniques, we can only guarantee that it computes a Darboux first integral when the exponents $\{\alpha_i\}_{i=1}^r$ are generic.

Section \ref{Alg} provides the mentioned algorithm together with an example showing how it works. The ingredients we need to develop the paper are given in Section \ref{prelim}. WAI first integrals are recalled in Section \ref{Red} and Section \ref{extendedre} introduces the concept of extended reduction of a vector field and describes it for vector fields in $\mathscr{D}$ (generic exponents).

\section{Preliminaires}
\label{prelim}

Let $\X$ be a complex planar polynomial differential system given by
\begin{equation}\label{e1}
\dot x=p(x,y),\quad \dot y=q(x,y),
\end{equation}
where $p,q\in\C[x,y]$ and  $\gcd(p,q)=1$. Assume that $d=\max\{\deg p,\deg q\}$ is the {\it degree} of  the  system $\X$, $\deg$ meaning total degree. In the sequel $\X$  also denotes the corresponding vector field $\X=p\pd{}x+q\pd{}y$.

Recall that a function $H = H(x,y)$ (may be multi-valued) is  a {\it first integral} of $\X$ if $H$ is constant on the solutions of the system. If $H\in\mathcal C^1$ then it satisfies the equation
\[
\X H=p\pd Hx+q\pd Hy=0,
\]
whereas $H$ is defined.

In this paper, we will study a certain family of vector fields $\X$ admitting a particular class of Darboux first integrals. We devote this section to summarize some concepts and properties we will need.

We have introduced the polynomial differential system (\ref{e1})  by using affine coordinates, however, in this paper, we will need to consider its complex projectivization. Therefore we start by studing the complex projectivization $\mathcal X$ of the vector fields $\X$ and their corresponding $1$-forms.

\subsection{Polynomial vector fields in $\CP^2$}

Let $A$, $B$, and $C$ be homogeneous polynomials of degree $d+1$ in the  variables $X$, $Y$, and $Z$ with coefficients in the complex numbers. A homogeneous $1$-form
\[
\Omega=AdX + BdY + CdZ
\]
of degree $d+1$ is called {\it projective} if $XA+YB+ZC=0$. This means that there exist three homogeneous polynomials $P$, $Q$, and $R$, in the variables $X, Y, Z$, of degree $d$ such that
\[
A=ZQ-YR, \; \; B=XR-ZP, \; \; C=YP-XQ.
\]
Then, we can write
\begin{equation}\label{omega}
\Omega = P(YdZ - ZdY) + Q(ZdX - XdZ) + R( XdY - YdX).
\end{equation}
The $1$-form $\Omega$ is usually called a Pfaff algebraic form of the complex projective plane $\CP^2$ \cite{jou}, and the triple $(P, Q, R)$ determines a homogeneous polynomial vector field in $\CP^2$ of degree $d$:
\[
\mathcal X=P\pd{}X +Q\pd{}Y +R\pd{}Z.
\]

Let $F\in\C[X,Y, Z]$ be a homogeneous polynomial. The curve  $F=0$ in $\CP^2$  is {\it invariant} under the flow of the vector field $\mathcal X$ if
\begin{equation}\label{XF}
\mathcal XF =P\pd{F}X +Q\pd{F}Y +R\pd{F}Z= KF,
\end{equation}
for some homogeneous polynomial $K \in C[ X, Y, Z]$ of degree $d - 1$, called the {\it cofactor} of $F$.

The singular points of a projective $1$-form $\Omega$ of degree $d+ 1$ or of its associated homogeneous polynomial vector field $\mathcal X$ of degree $d$ are those points in the projective plane satisfying the following system of equations:
\begin{equation}\label{SP}
ZQ-YR=0,\quad XR-ZP=0,\quad YP-XQ=0.
\end{equation}

Next we show how to get the projectivization, as a $1$-form or as a vector field, of System (\ref{e1}). System \eqref{e1} is equivalent to the $1$-form
\[
p(x, y) dy-q(x, y) dx,
\]
which extends to $\CP^2$ as the projective $1$-form of degree $d+ 1$
\begin{equation}\label{Eq.1form}
Z^{d+2}\left(p\left(\frac XZ,\frac YZ\right)\frac{YdZ-ZdY}{Z^2}-q\left(\frac XZ,\frac YZ\right)\frac{XdZ-ZdX}{Z^2}\right),
\end{equation}
where we have replaced $(x,y)$ by $(X/Z,Y/Z)$. Set $P(X,Y,Z)=Z^dp(X/Z,Y/Z)$ and $Q(X,Y,Z)=Z^dq(X/Z,Y/Z)$, then the $1$-form \eqref{Eq.1form} becomes
\[
P ( X , Y , Z )( YdZ - ZdY )+ Q ( X , Y , Z) ( ZdX - XdZ ).
\]
As a consequence, the vector field $\X$  is extended to  the homogeneous polynomial vector field of degree $d$ in $\CP^2$
\begin{equation}\label{Xproj}
\mathcal X = P \pd{}X + Q \pd{}Y.
\end{equation}
which is called the {\it complex projectivization} of the vector field $\X$. Notice that the line at infinity $Z = 0$ is a solution of the projective vector field because the third component $R$ in $\mathcal X$ is identically zero.
In addition, by \eqref{SP}, it turns out that the singular points of the complex projectivization of  System \eqref{e1} satisfy the following equalities:
\[
ZQ( X, Y, Z) = 0,\quad ZP( X, Y, Z) = 0, \quad YP( X, Y, Z) - XQ( X, Y, Z) = 0. \]
The third equation and the line $Z=0$ determine the singular points at infinity. Setting $Z=1$, the singular points which are not at infinity are obtained from the equalities $P=Q=0$.

Recall that an {\it invariant} algebraic curve of the vector field $\X$ is an affine algebraic curve with local equation $f(x,y)=0$, $f\in \mathbb{C}[x,y]$, such that $\X f = kf$, where $k\in \mathbb{C}[x,y]$ is called the cofactor of $f=0$ and has degree at most $d-1$. Now, if $f( x, y) = 0$ has degree $n\in\N$, then $F( X, Y, Z) = Z^nf(X/Z,Y/Z)=0$ is an invariant algebraic curve of the vector field \eqref{Xproj} with cofactor $K(X,Y, Z)=Z^{d-1}k(X/Z,Y/Z)$.

We have briefly reviewed the projectivization procedure for affine systems. We conclude this subsection by explaining how the affinization works. We will consider the local chart determined by $Z = 1$. Let $F = 0$ be an invariant algebraic curve of degree $n$ of the vector field defined by \eqref{omega} with cofactor $K$. Applying Euler's Theorem for homogeneous functions  and taking \eqref{XF} into account, it holds that $f(x,y)=F (X, Y, 1) = 0$ is an equation of an invariant algebraic curve of the restriction of $\Omega$ to the affine plane:
\[
\left(P(x,y,1)-xR(x,y,1)\right)dy - \left(Q(x,y,1)-yR(x,y,1)\right)dx.
\]
This $1$-form has degree $d+1$ and the cofactor of the invariant curve $f(x,y)=0$ is $k(x,y) = K(x,y,1)-nR(x,y,1)$. It has degree at most $d$ whenever the line $Z = 0$ is not invariant. We notice that the line $Z=0$ is invariant if and only if $Z|R$. In such a case, the degree of $k$ is at most $d-1$.

Next, we give a short overview of the blow-up technique to reduce the singularities of a planar vector field, which will be a key element in this paper.

\subsection{Reduction of singularities}

The singularities of planar vector fields can be reduced by blowing-up (see \cite[Section 4.1]{FGM} for details). This procedure, due to Seidenberg,  performs algebraic modifications and gives rise to a simpler vector field over a different to $\mathbb{CP}^2$ surface, which makes easier the study of the original vector field \cite{seid,D,AFJ}. Next we recall the concepts of singularity and simple singularity.

\begin{de}
{\rm A point $O\in \mathbb{C}^2$ is called a \emph{singularity} of a polynomial vector field $\X=p(x,y)\pd{}x+q(x,y)\pd{}y$ in $\mathbb{C}^2$, $\{x,y\}$ being local coordinates at $O$, if the multiplicity of $\X$ at $O$ (that is, the minimum of the orders of $p=p(x,y)$ and $q=q(x,y)$ at $O$) is strictly positive. Moreover $O$ is called a \emph{simple} singularity whenever $\X$ has multiplicity $1$ at $O$ and the matrix
$$\begin{pmatrix} \pd{p_1}x & \pd{p_1}y\\ \pd{q_1}x & \pd{q_1}y \end{pmatrix}, $$
defined by the first nonzero jet $p_1 dy - q_1dx$ of the differential $1$-form $p dy - qdx$,
has eigenvalues $\lambda_1,\lambda_2$ satisfying either $\lambda_1\lambda_2\not=0$ and $\frac{\lambda_1}{\lambda_2}\not\in\Q^+$, or $\lambda_1\lambda_2=0$ and $\lambda_1^2+\lambda_2^2\neq0$.
An \emph{ordinary singularity} is a singularity that is not simple. }
\end{de}

Seidenberg's reduction theorem \cite{seid} (see also \cite{Brunella}) shows that the singularities of a planar vector field can be transformed into simple singularities by means of blowing-ups:

\begin{teo}
    Let $O\in \mathbb{C}^2$ be an isolated singularity of a polynomial vector field  $\X=p\pd{}x+q\pd{}y$ in $\mathbb{C}^2$. Then there exists a finite sequence of blowing-ups centered at closed points of the successively obtained surfaces, $\pi:\mathcal Z\rightarrow \mathbb{CP}^2$, such that all the singularities of the strict transform of the vector field at the surface $\mathcal Z$ are simple.
\end{teo}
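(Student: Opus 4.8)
The plan is to reduce everything to the analysis of a single point blow-up and then track a numerical complexity that is forced to decrease until every singular point is simple. First I would fix local coordinates $(x,y)$ at $O$ and work with the associated $1$-form $\omega = p\,dy - q\,dx$, writing $\nu = \nu(\X,O)$ for the multiplicity (the common order of $p$ and $q$). I would perform the blow-up $\pi_1$ of $O$, described in the two standard charts by $(x,y) = (x_1, x_1 y_1)$ and $(x,y) = (x_2 y_2, y_2)$, pull back $\omega$, and factor out the largest power of the exceptional-divisor equation. This produces the strict transform $\tilde\X$ and exhibits the classical dichotomy: either the exceptional divisor $E_1$ is invariant for $\tilde\X$ (the \emph{non-dicritical} case) or it is not (the \emph{dicritical} case), according to whether the tangent cone of $\omega$ (its lowest-degree homogeneous part) is divisible by the radial factor $x\,dy-y\,dx$. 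In both cases $\tilde\X$ has only finitely many singular points on $E_1$, located at the zeros of the leading form of $\omega$.

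Next I would introduce the complexity and show it cannot grow. The basic invariant is the total multiplicity, and the key local estimate is that, summing over the points $p \in E_1$, one has $\sum_p \nu(\tilde\X, p) \le \nu$ in the non-dicritical case and $\sum_p \nu(\tilde\X, p) \le \nu - 1$ in the dicritical case. This forces the multiplicity to drop to $1$ after finitely many blow-ups, except that it can stabilize at values $\ge 2$ along certain chains; to break these I would pair the total multiplicity with a secondary invariant (for instance a Milnor-type number, or the number of singular points of multiplicity $\ge 2$ weighted by the contact of the separatrices with the exceptional configuration) and verify it strictly decreases at every step where the multiplicity fails to drop. Arranging these into a lexicographically ordered pair gives a quantity valued in a well-ordered set, so after finitely many blow-ups every singular point of the strict transform has multiplicity $1$.

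It then remains to reduce the multiplicity-$1$ points that are not yet simple, that is, those whose linear part has eigenvalues with $\lambda_1\lambda_2 \ne 0$ and $\lambda_1/\lambda_2 = a/b \in \Q^+$ with $a,b$ coprime positive integers (the resonant-node case). Here the reduction is an Euclidean-algorithm phenomenon: blowing up such a point, $E$ is invariant and $\tilde\X$ acquires at most two singular points on $E$, at the two eigendirections; a direct computation on the local model $b\,x\,dy - a\,y\,dx$ shows the new eigenvalue ratios are, up to sign, $(a-b)/b$ and $(a-b)/a$. Hence one of them becomes negative and yields a simple singularity, while the other keeps a positive ratio but with strictly smaller complexity $a+b$ (the degenerate endpoints, where the ratio becomes $0$ or $1$ or the blow-up turns dicritical, being themselves simple). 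Running this as the Euclidean algorithm on $(a,b)$, the complexity $a+b$ strictly decreases until the ratio leaves $\Q^+$, after which the point is simple.

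I expect the main obstacle to be the termination bookkeeping rather than any individual computation: choosing an invariant that genuinely decreases in \emph{every} case, in particular handling the degenerate linear parts (nilpotent or identically zero linear part) and the saddle-node-type stabilizations where the naive multiplicity stays constant, and checking that the resulting lexicographic complexity takes values in a well-ordered set. Combining the multiplicity-reduction step with the Euclidean reduction of resonant nodes, and observing that the whole construction is localized over the single point $O$ inside $\CP^2$, produces the desired finite sequence of blow-ups $\pi \colon \mathcal Z \to \CP^2$ after which all singularities of the strict transform of $\X$ are simple.
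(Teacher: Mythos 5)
A preliminary but important remark: the paper does not prove this statement at all --- it is quoted as Seidenberg's classical reduction theorem with pointers to \cite{seid} and \cite{Brunella} --- so your attempt can only be compared with the standard proofs in those references. Your outline does follow the classical strategy (blow up, monitor a numerical complexity, then run a Euclidean algorithm on the resonant eigenvalue ratios), and your third paragraph is essentially right: for a linearizable singularity with ratio $a/b\in\Q^+$ the two points created on the exceptional divisor have ratios built from $a-b$, $a$, $b$, and the complexity $a+b$ strictly decreases, so that part of the reduction terminates (modulo the endpoint $a=b$: ratio $1$ is \emph{not} simple under the paper's definition and requires one further blow-up, which is either dicritical--radial or produces a degenerate point).

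The genuine gap sits exactly where the content of the theorem lies: you never actually produce an invariant that strictly decreases in every case. The one concrete estimate you offer, $\sum_{p\in E_1}\nu(\tilde{\X},p)\le\nu$ in the non-dicritical case, is false: the tangent cone $xA_\nu+yB_\nu$ has degree $\nu+1$, so the correct bound is $\nu+1$, attained already by a nondegenerate linear singularity such as $2x\,dy-y\,dx$, which creates two multiplicity-one points on $E_1$. Worse, the multiplicity can strictly \emph{increase} under blow-up: for the cusp foliation $2y\,dy-3x^2\,dx$ one has $\nu=1$, the first blow-up yields a nilpotent multiplicity-one point, and the second blow-up yields a point of multiplicity $2$. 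So ``total multiplicity'' is not even non-increasing, and the sentence ``I would pair it with a secondary invariant \dots and verify it strictly decreases'' is precisely the missing proof. The standard way to close this is the intersection-theoretic identity expressing the Milnor number of $\X$ at $O$ in terms of the Milnor numbers of the strict transform along $E_1$ (which forces a strict drop whenever $\nu\ge 2$, with a separate count in the dicritical case), followed by a dedicated analysis of multiplicity-one points with nilpotent or vanishing linear part; this is carried out in \cite{seid} and \cite{Brunella}. Until such an invariant is exhibited and checked in all cases (including the saddle-node chains you mention), no termination has been established.
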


From a global point of view, Seidenberg's result states that, given an homogeneous polynomial vector field $\mathcal{X}$ in $\mathbb{CP}^2$, there exists a set of points (or \emph{configuration}, according with a forthcoming definition)
\[
{\mathcal S}(\mathcal{X})=\{Q_0, Q_1, \ldots , Q_n\}
\]
such that $Q_0 \in {X}_0 := \mathbb{CP}^2$, $\pi_{Q_{i-1}}: \mathrm{Bl}_{Q_{i-1}} (X_{i-1})\rightarrow X_{i-1}$ is the blowing-up of $X_{i-1}$ centered at $Q_{i-1}$,
 $ Q_i \in \mathrm{Bl}_{Q_{i-1}}( X_{i-1}) =: X_{i}$ for $1 \leq i \leq n$, and the composition
 $$\pi_{Q_0}\circ\cdots \circ \pi_{Q_n}: \mathcal Z:=X_{n+1}\longrightarrow \mathbb{CP}^2$$
is such that all the singularities of the strict transform $\tilde{\mathcal{X}}$ of $\mathcal{X}$ in $\mathcal Z$ are simple. We call ${\mathcal S}(\mathcal{X})$ the \emph{singular configuration} of $\mathcal{X}$. For an explicit local description of the reduction of singularities and the obtention of the singular configuration, including a detailed example, see Section 4 of \cite{FGM}.

A \emph{solution} of $\mathcal{X}$ at a point $p \in \mathbb{CP}^2$ is a holomorphic (possibly singular) irreducible curve on a neighborhood of $P$ which passes through $p$ and is invariant by $\mathcal{X}$.  A singularity is called \emph{dicritical} if there are infinitely many solutions through it (see \cite[Definition 1]{FGM} for an equivalent definition that allows us to detect the dicritical character of a singularity from the reduction process).

Along the paper, we will denote by $\mathcal{D}(\mathcal{X})$ the \emph{dicritical configuration} of $\mathcal{X}$, which is  the set of points $Q_i$ in $\mathcal{S}(\mathcal{X})$ such that $Q_i$ is a dicritical singularity of the strict transform of the vector field $\mathcal{X}$ at $Q_i$ (see \cite[page 360]{FGM} for the definition of strict transform of a vector field and the straightforward translation of the concept of dicritical singularity to these vector fields over surfaces obtained by blowing-up).\\

In the following two subsections we recall the concepts of configuration of infinitely near points and its proximity graph, and that of proximity graph defined by a positive real number.

\subsection{Proximity graph of a configuration: singular and dicritical graphs}

Let $P$ be a point of a complex surface. The {exceptional divisor} $E_P$  produced by blowing-up $P$ is called the {\it first infinitesimal neighborhood} of $P$. By induction, if $i>0$, then the points in the $i$th infinitesimal neighborhood of $P$ are the points in the first infinitesimal neighborhood of some point in the $(i-1)$th infinitesimal neighborhood of $P$. A point $Q\neq P$ in some infinitesimal neighborhood of $P$ is called \emph{proximate} to $P$ if $Q$ belongs to the strict transform of $E_P$. Also $Q$ is a \emph{satellite} point if it is proximate to two points; that is, if it is the intersection point of the strict transforms of two exceptional divisors. Non-satellite points are named \emph{free}.

Points in the $i$th infinitesimal neighborhood of $P$, for some $i>0$, are {\it infinitely near} to $P$. These points admit a natural ordering: a point $R$ precedes $Q$ if and only if $Q$ is infinitely near to $R$. Note that we agree that a point is infinitely near to itself.

A \emph{configuration of infinitely near points} (or, simply, a configuration) of a complex surface $X_0$ (it could be $\mathbb{CP}^2$ ) is a (finite or infinite) set of points
\[
{\mathcal C}=\{P_0, P_1,\ldots\},
\]
such that $P_0\in X_0$ and, for all $i\geq 1$, $P_i$ belongs to the blow-up $X_i$ of $X_{i-1}$ with center at $P_{i-1}$.

The Hasse diagram of $\mathcal C$, $H_{\mathcal C}$, with respect to the above alluded order relation is a union of rooted trees  whose set of vertices is bijective with $\mathcal C$. The {\it proximity graph} of $\mathcal{C}$, $\Gamma_{\mathcal C}$, is a labeled graph formed by $H_{\mathcal C}$ where, also, we
join with a dotted edge those vertices corresponding with points $P$ and $Q$ of $\mathcal C$ such that $Q$ is proximate to $P$ but $Q$ is not in the first infinitesimal neighborhood of $P$. For simplicity we delete those dotted edges that can be deduced from others.

If $\mathcal{X}$ is, as above, a projective vector field over $\mathbb{CP}^2$,  the proximity graph $\Gamma_{{\mathcal S}(\mathcal X)}$ (respectively, $\Gamma_{{\mathcal D}(\mathcal X)}$) is called the \emph{singular graph} (respectively, \emph{dicritical graph}) of $\mathcal{X}$.

\subsection{The proximity graph defined by a positive real number}\label{proximitygraph}

Let $X_0$ be a complex surface, $P$ a point in $X_0$ and $C$  a germ of curve on the local ring of $X_0$ at $P$ having only one analytic branch. Assuming that $P$ is singular, one can determine the configuration of infinitely near points ${\mathcal D}_C=\{P_0,P_1,\ldots, P_s \}$ such that:
\begin{itemize}
\item[(i)] $P_0=P$ and, for all $i\geq 1$, $P_i$ is the point where the exceptional divisor $E_{P_{i-1}}$  meets the strict transform of $C$.
\item[(ii)] The composition of the blowing-ups centered at the points of ${\mathcal D}_C$ gives rise to a minimal embedded resolution of the singularity of $C$ at $P$.
\end{itemize}
For $i \geq 0$, let $m_i$ denote the multiplicity at $P_i$ of the strict transform of $C$. Each one of these numbers $m_i$ satisfies the so-called \emph{proximity equalities}: $m_i=1$ if $i=s$ and, otherwise, $m_i=\sum m_j$, where the sum runs over the set of indices $j$ such that $P_j$ is proximate to $P_i$ \cite{C}. Notice that the $(s+1)$-tuple of multiplicities $(m_0,m_1,\ldots,m_s)$ uniquely determines the proximity graph of the configuration ${\mathcal D}_C$.

If the singularity of $C$ at $P$ has only one Puiseux pair (i.e., the minimal embedded resolution is obtained by blowing-up some free points and, afterwards, finitely many satellite points), then the sequence of multiplicities of $C$ at $P$ has the shape $$({r_0}_{(c_0)}, {r_1}_{(c_1)},\ldots,{r_{\ell}}_{(c_{\ell})}={1}_{(c_{\ell})}),$$  where the subindices $(c_i)_{i=0}^{\ell}$ indicate the number of times that each multiplicity is repeated. Moreover, the numbers $c_i$ come from the continued fraction expansion of the rational number
$$\frac{\sum_{i=0}^s m_i^2}{m_0^2}=[c_0;c_1,\ldots,c_{\ell}]:=c_0+\frac{1}{c_1+\frac{1}{c_2+\ldots \frac{1}{c_{\ell}}}};$$
$r_0=m_0$ and $r_1, r_2, \ldots, r_{\ell}$ are the successive remainders appearing when the Euclidean algorithm is applied to $\sum_{i=0}^s m_i^2$ and $m_0^2$ \cite{C}. Hence, in this case, the proximity graph of ${\mathcal D}_C$ is determined by the rational number $\frac{\sum_{i=0}^s m_i^2}{m_0^2}$. Similarly, the continued fraction expansion of any positive rational number $\beta$ determines a proximity graph, which we shall denote by ${\bf Prox}(\beta)$.

Now, let $\gamma$ be a positive irrational number and consider its (infinite) continued fraction expansion
$$\gamma=[c_0;c_1, c_2, \ldots]:=c_0+\frac{1}{c_1+\frac{1}{c_2+\ldots}}.$$
For each $i\geq 0$ set $\beta_i:=[c_0; c_1, \ldots, c_i]$. Then, the proximity graph defined by $\gamma$, ${\bf Prox}(\gamma)$, is given by the limit of the graphs ${\bf Prox}(\beta_i)$ in the following sense: the vertices (respectively, edges) of ${\bf Prox}(\gamma)$ are the vertices (respectively, edges) of ${\bf Prox}(\beta_i)$ for $i$ large enough. It has the shape described in Figure \ref{grafinf}.

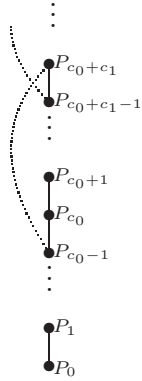
\begin{figure}[ht!]
\centering
\setlength{\unitlength}{0.5cm}
\begin{picture}(26,12)
\qbezier[50](11,4)(9,7)(11,9)
\qbezier[20](11,8)(10,9)(10,10)


\put(11,1){\circle*{0.3}}\put(11.1,0.8){\tiny{$P_0$}}
\put(11,1){\line(0,1){1}}

\put(11,2){\circle*{0.3}}\put(11.1,1.9){\tiny{$P_1$}}

\put(10.9,3){$\vdots$}

\put(11,4){\circle*{0.3}}\put(11.1,3.9){\tiny{$P_{c_0-1}$}}
\put(11,4){\line(0,1){1}}

\put(11,5){\circle*{0.3}}\put(11.1,4.9){\tiny{$P_{c_0 }$}}
\put(11,5){\line(0,1){1}}

\put(11,6){\circle*{0.3}}\put(11.1,5.9){\tiny{$P_{c_0 +1}$}}

\put(10.9,7){$\vdots$}

\put(11,8){\circle*{0.3}}\put(11.1,7.9){\tiny{$P_{c_0 +c_1-1}$}}
\put(11,8){\line(0,1){1}}

\put(11,9){\circle*{0.3}}\put(11.1,8.9){\tiny{$P_{c_0 +c_1}$}}

\put(11,10){$\vdots$}

\end{picture}

\caption{Proximity graph defined by $\gamma$.}
\label{grafinf}
\end{figure}

The relation between elimination of base points of certain linear systems and the reduction of singularities of planar vector fields having a rational first integral supports some reasonings in this paper. So we conclude this section with a brief of those concepts close to linear systems we will use.

\subsection{Linear systems and pencils}

A \emph{linear system} on $\mathbb{CP}^2$ is the set of algebraic curves defined by the polynomials in a subspace of $\mathbb{C}_m[X,Y,Z]$ for some natural number $m>0$, where $\mathbb{C}_m[X,Y,Z]$ denotes the (projective) space of homogeneous polynomials of degree $m$ in the variables $X,Y,Z$. Notice that a linear system has structure of projective space. A \emph{pencil} is a linear system of projective dimension $1$. Next, we introduce the concept of cluster of $\mathbb{CP}^2$.

\begin{de}  {\rm
A \emph{cluster} of infinitely near points (or, simply,  a cluster) of $\mathbb{CP}^2$ is a pair $({\mathcal C}, {\mathbf m})$ where $\mathcal C=\{Q_0, Q_1, \ldots,Q_h\}$ is a configuration of infinitely near points of $\mathbb{CP}^2$ and ${\mathbf m}=(m_0, m_1, \ldots,m_h)\in \N^n$, $\N$ being the set of positive integers.}
\end{de}

We desire to consider linear systems determined by clusters. To this purpose, we require some notations and concepts. We start by recalling the concept of virtual transform (respectively, passing virtually) at (respectively, through) a point of a cluster. For each $Q_i\in {\mathcal C}$, we set
\[
\ell(Q_i) := \mathrm{card} \{Q_j\in {\mathcal C} | \mbox{ $Q_i$ is infinitely near to $Q_j$} \}.
\]
Now consider a cluster ${\mathcal K}=({\mathcal C}, {\mathbf m})$, an algebraic curve $C$ in $\mathbb{CP}^2$, and a point $Q_k\in {\mathcal C}$. When $\ell(Q_k)=1$, we take a local chart at $Q_k$ with local coordinates $(x,y)$ and a local equation of $C$, $f(x,y)=0$. The \emph{virtual transform} of $C$ at $Q_k$ with respect to the cluster $\mathcal K$ (denoted by $C^{\mathcal K}_{Q_k}$) is the (local) curve defined by $f(x,y)=0$. The multiplicity of $C^{\mathcal K}_{Q_k}$ at $Q_k$, denoted $m_{Q_k}(C^{\mathcal K}_{Q_k})$, is the degree of the first non-zero jet of $f(x,y)$. Finally, we say that $C$ \emph{passes virtually} through $Q_k$ with respect to $\mathcal K$ whenever $ m_{Q_k}(C^{\mathcal K}_{Q_k}) \geq m_k$.

Suppose now that $\ell(Q_k)>1$ and set $Q_j\in {\mathcal C}$ such that $Q_k$ is in the first infinitesimal neighborhood  of $Q_j$. Assume inductively that $C$ \emph{passes virtually} through $Q_j$ with respect to $\mathcal K$, take local coordinates $(x,y)$ at $Q_j$ and let $f(x,y)=0$ be a local equation of $C^{\mathcal K}_{Q_j}$. The point $Q_k$ belongs to the surface obtained by blowing-up $Q_j$ and thus
$Q_k=(0,\lambda)$ (respectively, $Q_k=(\lambda,0)$) in local coordinates $(x,t=y/x)$ (respectively, $(s=x/y,y)$). This allows us to define the \emph{virtual transform} of $C$ at $Q_k$ with respect to the cluster $\mathcal K$, $C^{\mathcal K}_{Q_k}$, as the (local) curve defined by $x^{-m_j}f\left( x,x(t+\lambda) \right)=0$ (respectively, $x^{-m_j}f\left((s+\lambda)y,y\right)=0$). As above, the multiplicity of $C^{\mathcal K}_{Q_k}$ at $Q_k$ is denoted by $m_{Q_k}(C^{\mathcal K}_{Q_k})$ and $C$ \emph{passes virtually} through $Q_k$ with respect to $\mathcal K$ if $m_{Q_k}(C^{\mathcal K}_{Q_k}) \geq m_k$. When the curve $C$ passes virtually through $Q_i$ with respect to $\mathcal K$ for all $Q_i\in {\mathcal K}$, we say that $C$ \emph{passes virtually} through $\mathcal K$.\\

Next we define the above mentioned linear system determined by a cluster and a positive integer.

\begin{de}
{\rm
Given a positive integer $m$ and a cluster $\mathcal K=({\mathcal C},\mathbf{m})$ of $\mathbb{CP}^2$, the \emph{linear system determined by $m$ and $\mathcal K$}, denoted by
${\mathcal L}_m(\mathcal K)$ or ${\mathcal L}_m({\mathcal C},\mathbf{m})$, is the linear system in $\mathbb{CP}^2$ given by those curves defined by polynomials in $\mathbb{C}_m[X,Y,Z]$ that pass virtually through $\mathcal K$.}
\end{de}

Another important concept for us is that of the strict transform of a curve on a surface obtained by a sequence of point blowing-ups.

\begin{de}
{\rm
The {\it strict transform} $\tilde{C}$ of an algebraic curve $C$ on a complex surface $Z$ obtained by the composition $\pi:Z\rightarrow \mathbb{CP}^2$ of the sequence of point blowing-ups associated with a (finite) configuration of infinitely near points ${\mathcal C}$ is the image of $C$ by the birational map $\pi^{-1}$.
}
\end{de}

Let $n$ be a positive integer, let $F_1, F_2, \ldots, F_s\in \mathbb{C}_n[X,Y,Z]$ be linearly independent polynomials in $\mathbb{C}_n[X,Y,Z]$ and assume that $F_1, F_2,\ldots,F_s$ have no common factor. Set $\mathcal L= \mathbb{P}V$ the linear system on $\mathbb{CP}^2$ associated to the linear space over $\mathbb{C}$, $V=\langle F_1, F_2, \ldots,F_s\rangle$, spanned by the polynomials $F_i$, $1 \leq i \leq s$. Then, there exists a configuration of infinitely near points of $\mathbb{CP}^2$, $\mathcal{BP}({\mathcal L})$, and a finite set of linear subspaces ${\mathcal H}_i\subsetneq \mathbb{CP}^{s-1}$, $1\leq i\leq t$, such that the strict transforms of the curves with equations $$\alpha_1 F_1(X,Y,Z)+ \alpha_2 F_2(X,Y,Z)+ \cdots+\alpha_s F_s(X,Y,Z)=0,$$  $(\alpha_1, \alpha_2, \ldots,\alpha_s)\in \mathbb{CP}^{s-1}\setminus \bigcup_{i=1}^t {\mathcal H}_i$ (called  \emph{generic} curves of $\mathcal L$) have the same multiplicities at every point  $Q\in \mathcal{BP}({\mathcal L})$ (denoted by $\mathrm{mult}_Q({\mathcal L})$) and have pairwise empty intersections at the surface obtained by blowing-up the points in $\mathcal{BP}({\mathcal L})$.  Notice that, if ${\mathcal L}$ is a pencil, then $\bigcup_{i=1}^t {\mathcal H}_i$ is a finite set.

\begin{de} {\rm
 With the above notations, the pair $(\mathcal{BP}({\mathcal L}),\mathbf{m})$, where $$\mathbf{m}=\left(\mathrm{mult}_Q({\mathcal L})\right)_{Q\in \mathcal{BP}({\mathcal L})},$$ is a cluster of inifinitely near points of  $\mathbb{CP}^2$ called the \emph{cluster of base points} of $\mathcal L$.}
\end{de}

\section{Reduction of singularities of a vector field with WAI first integral}
\label{Red}
We start this section by recalling the concept of plane curve with only one place at infinity, which was initially considered in \cite{pa1,pa2,am}.

\begin{de}
{\rm An algebraic projective curve $C$ defined by a homogeneous polynomial  $F\in\C[X,Y,Z]$ has {\it only one place at infinity} if $C$ meets the line at infinity $Z=0$ at a unique point $P$ and $C$ is reduced and unibranched (i.e., analytically irreducible) at $P$.
}
\end{de}

Next we introduce the concept of well-behaved at infinity (WAI) first integral.

\begin{de}
{\rm
A complex planar polynomial differential system (or vector field) $\X$ has a WAI first integral if it has a polynomial first integral of the form
\[
H=\prod_{i=1}^r f_i^{n_i},
\]
where $r$ and $n_i$, $1 \leq i \leq r$, are positive integers and $f_i$, $1 \leq i \leq r$, are polynomials in $\C[x,y]$ of degree $d_i\in\N$ such that each complex projective curve $C_i$ of $\mathbb{CP}^2$ defined by the projectivization $F_i(X,Y,Z)=Z^{d_i}f_i(X/Z,Y/Z)$ of $f_i$ has only one place at infinity.

Along this paper we will also assume that, for having a WAI first integral $H$,
$$\gcd(n_1, n_2, \ldots,n_r)=1,\;\;\;r \geq 2,$$ and that $H$ fulfills the following condition:
\begin{equation}
\label{S}
f_i-\lambda f_j\not\in \mathbb{C} \;\mbox{ for all $i,j\in \{1, 2, \ldots,r\}$ such that $i\neq j$ and for all $\lambda \in \mathbb{C}$.}
\end{equation}
}
\end{de}
\vspace{2mm}

For $1 \leq j \neq i \leq r$, write
\begin{equation}
\label{ro}
\rho_{ji} := \sum_Q (C_j,C_i)_Q,
\end{equation}
where $Q$ runs over the set of common points of $C_j$ and $C_i$ outside the line at infinity (defined by $Z=0$), and $(C_j,C_i)_Q$ is the intersection multiplicity between $C_i$ and $C_j$ at $Q$.

We will assume in the remaining of this section that $\X$ is a complex planar polynomial vector field which admits a WAI first integral. By \cite[Corollary 1]{FGM}, the dicritical configuration $\mathcal{D}(\mathcal{X})$ of the complex projectivization $\mathcal{X}$ of $\X$ to $\mathbb{CP}^2$ coincides with the configuration of the cluster of base points of the pencil $\mathbb{P}V$, where $V = \langle \prod_{i=1}^r F_i^{n_i}, Z^d\rangle$ and $d=\sum_{i=1}^r n_i d_i$, $d_i:=\deg(F_i)$. This cluster is studied in  \cite{c-p-r-2}. Taking advantage of it we state the following result, which provides a very specific information about the dicritical configuration $\mathcal{D}(\mathcal{X})$.

\begin{pro}\label{previa}
Let $\X$ and $\mathcal{X}$ be as above. Then
$$\mathcal{D}(\mathcal{X})=\bigcup_{i=1}^r  \left(\mathcal{BP}({\mathcal P}_i)\cup \{S_i\}\cup {\mathcal L}_i\right),$$
where, for each $i=1, 2, \ldots,r$:
\begin{itemize}
\item[(a)] $\mathcal{BP}({\mathcal P}_i)$ is the configuration of base points of the pencil ${\mathcal P}_i$ defined by the non-zero linear combinations of the polynomials $F_i$ and $Z^{d_i}$.

\item[(b)] The configuration $\mathcal{BP}({\mathcal P}_i)$, ordered by the relation ``to be infinitely near to", has only one  maximal point $Q_i$. Moreover, the strict transform of the curve $C_i$ with equation $F_i=0$ meets the exceptional divisor $E_{Q_i}$ at a unique point $S_i$. The local 1-form at $S_i$ defining the strict transform of $\mathcal{X}$ has the shape $\delta_i u\;dt-n_i t\;du$, where $u=0$ (respectively, $t=0$) is a local equation of the strict transform of $C_i$ at $S_i$ (respectively, the exceptional divisor $E_{Q_i}$) and
$$\delta_i=\sum_{\substack{j=1 \\ j \neq i}}^r n_j\rho_{ji}.$$

Furthermore, $\{S_i\}\cup {\mathcal L}_i$ is the configuration of base points of the (local) pencil (at $S_i$) defined by the non-zero linear combinations of $u^{n_i}$ and $t^{\delta_i}$.

\item[(c)] No value $\rho_{ji}$ equals zero.

\item[(d)] The point $S_i$ is free.

\item[(e)] If $i\neq j$ then $S_i$ is not infinitely near to $S_j$ and $S_j$ is not infinitely near to $S_i$.

\end{itemize}

\end{pro}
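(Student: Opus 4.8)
The plan is to build everything on the identification, provided by \cite[Corollary 1]{FGM}, of the dicritical configuration $\mathcal{D}(\mathcal{X})$ with the configuration of base points of the pencil $\mathbb{P}V$, $V=\langle \prod_{i=1}^r F_i^{n_i}, Z^d\rangle$, $d=\sum_i n_i d_i$. First I would analyze this cluster locally over each point $P_i := C_i\cap\{Z=0\}$ of the line at infinity. Since each $C_i$ has only one place at infinity, near $P_i$ the generic member of $\mathbb{P}V$ separates from $Z^d$ exactly along the minimal embedded resolution of $C_i$ at $P_i$; this is precisely the cluster of base points of the pencil at infinity $\mathcal{P}_i=\langle F_i,Z^{d_i}\rangle$ studied in \cite{c-p-r-2}. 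This yields (a) and, because $C_i$ is unibranched, the fact that $\mathcal{BP}(\mathcal{P}_i)$ is a chain with a unique maximal point $Q_i$ whose exceptional divisor $E_{Q_i}$ is the dicritical component of $\mathcal{P}_i$; the strict transform of $C_i$, being smooth there, crosses $E_{Q_i}$ transversally at a single free point $S_i$, giving the first assertion of (b) and assertion (d). Assertion (e) would follow from the same picture: $S_i$ lies on $E_{Q_i}$ and $S_j$ on $E_{Q_j}$, and condition (\ref{S}) (i.e.\ $C_j\notin\mathcal{P}_i$) forces the two resolution chains to separate before reaching $Q_i$ and $Q_j$, so neither terminal point is infinitely near the other.

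The heart of the argument is the local form of $\mathcal{X}$ at $S_i$ and the value of $\delta_i$. I would work with the closed logarithmic $1$-form $\eta=\sum_{j=1}^r n_j\,\frac{dF_j}{F_j}-d\,\frac{dZ}{Z}=d\log\Phi$, where $\Phi=\prod_j F_j^{n_j}/Z^d$ is the (meromorphic) first integral, since the foliation is $\eta=0$. Choosing local coordinates $(u,t)$ at $S_i$ with $\{u=0\}$ the strict transform of $C_i$ and $\{t=0\}=E_{Q_i}$, and using that $C_j$ ($j\neq i$) and the line $Z=0$ avoid $S_i$, the pullback of $\Phi$ reads $u^{n_i}t^{-\delta_i}$ times a unit, where $\delta_i=d\,\bar e_i(Z)-\sum_{j} n_j\,\bar e_i(C_j)$ and $\bar e_i(\,\cdot\,)$ denotes the coefficient of $E_{Q_i}$ in the total transform (equivalently, the value of the divisorial valuation $v_{Q_i}$). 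Differentiating gives the stated $1$-form $\delta_i u\,dt-n_i t\,du$, and the local first integral $u^{n_i}/t^{\delta_i}$ identifies $\{S_i\}\cup\mathcal{L}_i$ with the base points of $\langle u^{n_i},t^{\delta_i}\rangle$, completing (b).

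It remains to evaluate $\delta_i$. Writing $d=\sum_j n_j d_j$ gives $\delta_i=\sum_{j} n_j\big(d_j\,\bar e_i(Z)-\bar e_i(C_j)\big)$. For $j=i$ the term vanishes: since $E_{Q_i}$ is the dicritical divisor of $\mathcal{P}_i$, the function $F_i/Z^{d_i}$ restricts non-constantly to $E_{Q_i}$, so $\mathrm{ord}_{E_{Q_i}}(F_i/Z^{d_i})=\bar e_i(C_i)-d_i\bar e_i(Z)=0$. For $j\neq i$ the quantity $d_j\bar e_i(Z)-\bar e_i(C_j)$ is the polar order of $f_j$ along $E_{Q_i}$; using Noether's formula for $(C_i\cdot C_j)$ along the chain together with Bézout's theorem $(C_i\cdot C_j)=d_id_j$, and the fact that condition (\ref{S}) keeps $\tilde C_j$ away from $S_i$ so that the intersection of $C_i$ and $C_j$ at infinity is entirely accounted for by the points preceding $Q_i$, this polar order equals the finite intersection number $\rho_{ji}$. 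Hence $\delta_i=\sum_{j\neq i}n_j\rho_{ji}$. Finally, assertion (c) follows because $\rho_{ji}=0$ would force $(C_i\cdot C_j)_\infty=d_id_j$, i.e.\ $C_i$ and $C_j$ meet only at infinity with maximal multiplicity, which for curves with one place at infinity means $C_j\in\mathcal{P}_i$, contradicting (\ref{S}).

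The main obstacle I expect is the last step: matching the valuation-theoretic expression $d_j\bar e_i(Z)-\bar e_i(C_j)$ with the geometric number $\rho_{ji}$. This requires a careful comparison, along the resolution chain of $C_i$, between the coefficients $\bar e_i$ of the total transforms and the intersection multiplicities dictated by Noether's formula, and it is here that both the one-place-at-infinity hypothesis (to force the whole intersection at infinity onto the chain ending at $Q_i$) and condition (\ref{S}) (to prevent $C_j$ from reaching $S_i$) are essential.
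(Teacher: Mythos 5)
Your overall strategy coincides with the paper's: everything is reduced, via \cite[Corollary 1]{FGM}, to the structure of the cluster of base points of the pencil $\langle \prod_{i=1}^r F_i^{n_i}, Z^d\rangle$, and the paper in fact disposes of (a), (b) and (e) by citing \cite[Lemma 1]{c-p-r-2} and its proof, and of (d) by observing that the blowing-ups at the points of $\mathcal{BP}({\mathcal P}_i)$ give an embedded resolution of the branch of $C_i$ at infinity, so that $S_i$ is free. Your re-derivation of the local $1$-form at $S_i$ from $d\log\Phi$ and divisorial valuations is a legitimate, more self-contained route to (b), but, as you yourself acknowledge, it hinges on the identification $d_j\bar e_i(Z)-\bar e_i(C_j)=\rho_{ji}$, which you leave open; in the paper this is exactly the content that is delegated to \cite[Lemma 1]{c-p-r-2}, so this part of your argument is incomplete rather than different.

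The genuine gap is in (c). You assert that $\rho_{ji}=0$, i.e.\ an intersection of $C_i$ and $C_j$ concentrated at infinity, ``for curves with one place at infinity means $C_j\in\mathcal{P}_i$''. That implication is not a formal consequence of B\'ezout; it is a nontrivial statement and is precisely what the paper's proof of (c) establishes. The paper argues as follows: by (b), (e) and the assumption $\rho_{ji}=0$, the strict transforms $\tilde C_i$ and $\tilde C_j$ on the surface $Y$ obtained by blowing up $\mathcal{BP}({\mathcal P}_i)$ are disjoint, hence $\tilde C_i\cdot\tilde C_j=0$; since $\tilde C_i$ has self-intersection $0$ and moves in the pencil obtained by eliminating the indeterminacy of $\mathcal{P}_i$, the projection formula forces $\tilde C_j$ to be contracted by the morphism defined by the global sections of $\mathcal{O}_Y(\tilde C_i)$, i.e.\ $C_j$ is a member of $\mathcal{P}_i$, contradicting Condition (\ref{S}). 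Without some such argument (or an explicit appeal to the cone-of-curves results of \cite{c-p-r-2}), your treatment of (c) asserts the conclusion rather than proving it.
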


\begin{proof}
Items (a), (b), and (e) follow from \cite[Lemma 1]{c-p-r-2} and its proof. The composition of the blowing-ups centered at the points of ${\mathcal P}_i$ is an embedded resolution of the singularity of $C_i$ at infinity (see \cite{pa2, Mo}) and, as a consequence, the point $S_i$ is free. This proves Item (d).

It remains to prove (c). Reasoning by contradiction, assume that $\rho_{ji}=0$ for some indexes $i, j \in \{1, 2, \ldots,r\}$ such that $i\neq j$. Let $Y$ be the surface obtained by blowing-up the points in $\mathcal{BP}({\mathcal P}_i)$ and let $\tilde{C}_i$ and $\tilde{C}_j$ be the respective strict transforms of $C_i$ and $C_j$ on $Y$. By (b) and (e) and our assumption,   $\tilde{C}_i$ and $\tilde{C}_j$ do not meet, and then $\tilde{C}_i\cdot \tilde{C}_j=0$. This means, by the projection formula, that $\tilde{C}_j$ is contracted by the morphism defined by a basis of global sections of the sheaf $\mathcal O_{Y}(\tilde{C}_i)$  \cite{hart}. Thus $C_j$ must be a curve of the pencil $\mathcal{P}_i$, and this contradicts Condition (\ref{S}), which is assumed for the WAI first integral $H$.
\end{proof}

 \begin{rem}
 \label{NOTA}
 {\rm The configuration $\bigcup_{i=1}^r  \left(\mathcal{BP}({\mathcal P}_i)\cup \{S_i\}\right)$
is independent of the exponents $n_1, n_2, \ldots,n_r$ appearing in the WAI first integral.
}
\end{rem}

\section{Extended reduction of singularities and DPWAI first integrals}
\label{extendedre}

An algorithm for deciding whether a polynomial differential system as (\ref{e1}) has a WAI first integral, and computing that integral in the affirmative case, was given in \cite{FGM}. This algorithm uses a part of the reduction of the corresponding vector field $\mathcal{X}$. We want to find out whether there is some close procedure to compute other types of Darboux first integrals involving curves with only one place at infinity. Next we introduce a class of Darboux functions and an extended reduction of singularities of vector fields suitable for our purposes.
\begin{de}
\label{DPWAI}
{\rm
A Darboux positive well-behaved at infinity (DPWAI, for short) function is a multi-valued function of the form
\[
H=\prod_{i=1}^rf_i^{\alpha_i}, \;\; r \geq 2,
\]
where:
\begin{enumerate}
\item The $f_i$'s, $i \leq i \leq r$, are polynomials in $\C[x,y]$ of degree $d_i\in\N$ such that the curve $C_i$ defined by the projectivization $F_i(X,Y,Z)=Z^{d_i}f_i(X/Z,Y/Z)$ of $f_i$ has only one place at infinity.
    \item The values $\alpha_i$, $i \leq i \leq r$, are strictly positive real numbers.
    \item The polynomials $f_i$, $1 \leq i \leq r$ satisfy Condition (\ref{S}).
    \item For all $i\in \{1, 2, \ldots,n\}$, there is no positive rational number $\beta$ such that $$\beta \alpha_i=\sum_{\substack{j=0 \\ j \neq i}}^r \rho_{ji}\alpha_j,$$
        where $\rho_{ji}$ is the value defined in (\ref{ro}).
\end{enumerate}
}
\end{de}

\begin{rem}
{\rm Note that our last condition  holds, in particular, when $\{\alpha_i\}_{i=1}^r$ is a linearly independent set over the field $\mathbb{Q}$.
}
\end{rem}

Now, we define the concept of extended reduction of singularities.

\begin{de}
\label{extendida}
{\rm
An \emph{extended reduction of singularities} of an arbitrary singular polynomial vector field ${\mathcal X}$ on $\mathbb{CP}^2$ is a sequence of blowing-ups
\begin{equation}\label{ext}
\cdots \rightarrow X_{i+1}\rightarrow X_i\rightarrow \cdots \rightarrow X_1\rightarrow X_0=\mathbb{CP}^2
\end{equation}
obtained by performing, first, a reduction of singularities of ${\mathcal X}$ and, then, by successively blowing-up every simple singularity of the transformed vector field whose quotient of eigenvalues is a positive real number.
}
\end{de}

This extended reduction can be regarded as a more natural procedure than Seidenberg's reduction because we keep blowing-up points while the quotient of eigenvalues is a positive real number $\gamma$. Although we can obtain an infinite sequence of blowing-ups, this sequence is completely determined by the continued fraction expansion of $\gamma$. Seidenberg applies only this procedure when $\gamma$ is rational giving rise to a finite sequence of point blowing-ups.

Let us denote by ${\mathcal E}(\mathcal{X})$ the configuration of infinitely near points of $\mathbb{CP}^2$ formed by the centers of the extended reduction of singularities of $\mathcal{X}$. Also, ${\mathcal E}_{\infty}(\mathcal{X})$ will denote the configuration of points in $\mathcal{E}(\mathcal X)$ whose images on $\mathbb{CP}^2$ by the sequence (\ref{ext}) belong to the line at infinity.

\begin{de}\label{generic}
{\rm

We will say that a property $\bf P$ is satisfied for \emph{generic} exponents $\{\alpha_i\}_{i=1}^r$ if there exists a finite set of non-zero polynomials $\{h_j(z_1, z_2, \ldots,z_r)\}_{j\in J}\subseteq \mathbb{C}[z_1,\ldots,z_r]$, $J$ a set of indexes, fulfilling the following condition: $\bf P$ is satisfied for all $r$-tuples $(\alpha_1, \alpha_2, \ldots,\alpha_r)$ such that $h_j(\alpha_1,\ldots,\alpha_r)\neq 0$ for all $j\in J$.
}
\end{de}

Until the end of this section, we will suppose that $\X$ is a polynomial vector field (or differential system) of $\mathbb{C}^2$ having a DPWAI first integral
$$
H=\prod_{i=1}^rf_i^{\alpha_i}.
$$

Consider the complex projectivization ${\mathcal{X}}$ of $\X$ and set $\Omega_{\mathcal{X}}=AdX+BdY+CdZ$ a homogeneous reduced 1-form defining ${\mathcal{X}}$. The following result explains how $\Omega_{\mathcal{X}}$ writes for generic exponents. See \cite{LP2004} for the affine version with a polynomial differential system having a generalized Darboux first integral.

\begin{pro}\label{prop}
With the above notation it follows that, for generic exponents $\alpha_1, \ldots,\alpha_r$, the reduced $1$-form $\Omega_{\mathcal{X}}$ is, up to multiplication by a non-zero constant, equal to
\begin{multline*}
\left(\sum_{i=1}^r \alpha_i Z \prod_{\substack{j=1 \\ j \neq i}}^r F_j \frac{\partial F_i}{\partial X}\right) dX+\left(\sum_{i=1}^r \alpha_i Z \prod_{\substack{j=1 \\ j \neq i}}^r F_j\frac{\partial F_i}{\partial Y}\right) dY \\
+\left(\sum_{i=1}^r \alpha_i Z\prod_{\substack{j=1 \\ j \neq i}}^r F_j \frac{\partial F_i}{\partial Z}-d\prod_{j=1}^r F_j\right) dZ,
\end{multline*}
where $F_i$ is the projectivization of $f_i$, $ 1 \leq i \leq r$,  and $d=\sum_{i=1}^r \alpha_i \deg(F_i)$.
\end{pro}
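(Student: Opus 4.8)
\emph{Overview of the plan.} I would recognize the foliation of $\mathcal{X}$ as the one cut out by the closed logarithmic $1$-form attached to $H$, clear its poles to produce a projective polynomial $1$-form, and then show that for generic exponents this form is reduced, so that it must coincide with $\Omega_{\mathcal{X}}$ up to a constant.

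\emph{Step 1 (logarithmic form and its projectivization).} Since $H=\prod_{i=1}^r f_i^{\alpha_i}$ is a first integral of $\X$, the multi-valued $1$-form $dH/H=\sum_{i=1}^r \alpha_i\, df_i/f_i$ annihilates $\X$ and hence defines the same foliation as $\X$ on $\mathbb{C}^2$. I would pass to $\mathbb{CP}^2$ via $\log f_i=\log F_i - d_i\log Z$ (with $d_i=\deg F_i$), obtaining the homogeneous form $\eta=\sum_{i=1}^r \alpha_i\, dF_i/F_i - d\, dZ/Z$ with $d=\sum_i \alpha_i d_i$, whose restriction to the chart $Z=1$ recovers $dH/H$. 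Euler's identity $X\partial_X F_i+Y\partial_Y F_i+Z\partial_Z F_i=d_iF_i$ shows that $\eta$ annihilates the radial vector field, so $\eta$ descends to $\mathbb{CP}^2$ and defines $\mathcal{X}$. Putting $\Omega:=(Z\prod_i F_i)\,\eta$ and expanding each $dF_i$ into partial derivatives yields exactly the three coefficients in the statement; the subtracted term $-d\prod_j F_j$ in the $dZ$-coefficient is precisely what Euler's identity forces so that $XA+YB+ZC=0$. Thus $\Omega$ is a genuine projective $1$-form, homogeneous of degree $\sum_i d_i$, defining the foliation $\mathcal{X}$.

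\emph{Step 2 (reduction to reducedness).} Because $\Omega$ and the reduced form $\Omega_{\mathcal{X}}$ define the same foliation, and a foliation on $\mathbb{CP}^2$ determines its reduced defining $1$-form up to a nonzero constant, it only remains to show that $\Omega$ is itself reduced for generic $\alpha$, i.e.\ $\gcd(A,B,C)=1$. Using Euler's identity once more I would rewrite $A=Z\Phi_X$, $B=Z\Phi_Y$, $C=-(X\Phi_X+Y\Phi_Y)$, where $\Phi_X=\sum_i \alpha_i(\prod_{j\neq i}F_j)\partial_X F_i$ and $\Phi_Y$ is its analogue with $\partial_Y$. Since each $C_i$ has one place at infinity, $F_j(X,Y,0)$ is a nonzero power of a linear form, so $C|_{Z=0}=-d\prod_j F_j(X,Y,0)\neq 0$; hence $Z\nmid C$ and therefore $\gcd(A,B,C)=\gcd(\Phi_X,\Phi_Y)$. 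Everything thus reduces to the coprimality of $\Phi_X$ and $\Phi_Y$.

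\emph{Step 3 (genericity).} The set $\{\alpha\in\mathbb{C}^r:\gcd(\Phi_X,\Phi_Y)\neq 1\}$ is Zariski closed by upper semicontinuity of fibre dimension applied to the incidence variety $\{(\alpha,[X:Y:Z]):\Phi_X=\Phi_Y=0\}\subset\mathbb{C}^r\times\mathbb{CP}^2$ (a nontrivial common factor is equivalent to a one-dimensional fibre); being a proper closed subset it lies in the zero locus of finitely many polynomials $h_j(\alpha)$, which is exactly the genericity of Definition~\ref{generic}. To prove properness I would show $\gcd_{\mathbb{C}(\alpha)}(\Phi_X,\Phi_Y)=1$. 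As $\Phi_X,\Phi_Y$ are homogeneous of degree one in $\alpha$, matching coefficients of the $\alpha_i$ turns any common factor over $\mathbb{C}(\alpha)$ into an $\alpha$-free polynomial witnessing one of two degenerate configurations of the $C_i$: an irreducible $G$ dividing $(\prod_{j\neq i}F_j)\partial_X F_i$ and $(\prod_{j\neq i}F_j)\partial_Y F_i$ for every $i$, or $\alpha$-free $\Psi_X,\Psi_Y$ with $\partial_X F_i\,\Psi_Y=\partial_Y F_i\,\Psi_X$ for every $i$. In the first case, taking $i=k$ with $G=F_k$ and using that $F_k$ is irreducible (curves with one place at infinity are irreducible) and coprime to the remaining $F_j$ (by Condition~(\ref{S})) forces $\partial_X F_k=\partial_Y F_k=0$, making $F_k$ a power of $Z$, which is excluded; when $G$ is coprime to all $F_i$ one gets a curve $\{G=0\}$ appearing as a double component of a member of every pencil $\mathcal{P}_i$ of Proposition~\ref{previa}(a), a configuration that the single place at infinity of the $C_i$ rules out. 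In the second case all the $C_i$ share a common tangent direction field, and irreducibility together with one place at infinity forces each $C_i$ to be a single line; a common direction then makes them parallel, so $f_i-f_j\in\mathbb{C}$ for some pair, contradicting Condition~(\ref{S}).

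\emph{Main obstacle.} The crux is Step 3: the coprimality of the gradient polynomials $\Phi_X,\Phi_Y$ for generic $\alpha$, equivalently the finiteness of the critical locus of the master function $\sum_i \alpha_i\log f_i$. A mere degree count is inconclusive, because a spurious common factor and the accompanying drop in the degree of the foliation exactly cancel; one must genuinely use the geometry of the $C_i$ (irreducibility, distinctness through Condition~(\ref{S}), and their single place at infinity) to forbid a persistent one-dimensional critical locus. In particular, excluding the subcase where $G$ is coprime to all $F_i$ is the most delicate point and is where the structure of curves with one place at infinity is essential.
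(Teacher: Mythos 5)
Your proposal is essentially correct but follows a genuinely different route from the paper at the crucial step, which in both cases is proving that the displayed $1$-form is reduced for generic exponents. The paper first treats positive \emph{rational} exponents $\bar{\beta}\in(\mathbb{Q}^+)^r$, where the vector field has the rational first integral $\prod F_i^{n_i}/Z^m$: reducedness is then deduced from the fact that the pencil $\langle \prod F_i^{n_i},Z^m\rangle$ has exactly two non-integral members (Lemma 1~(iv) of Campillo--Piltant--Reguera, which is where one place at infinity enters) combined with the degree/factorization formula of Garc\'{\i}a Zamora for foliations with a rational first integral; it then passes to generic real exponents by observing that the resultants $\mathrm{Res}_x(a_{\bar\beta},b_{\bar\beta})$ and $\mathrm{Res}_y(a_{\bar\beta},b_{\bar\beta})$ are nonzero polynomials in $\bar\beta$, their nonvanishing being the genericity condition of Definition~\ref{generic}. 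You instead work directly over $\mathbb{C}(\alpha_1,\dots,\alpha_r)$, reduce $\gcd(A,B,C)$ to $\gcd(\Phi_X,\Phi_Y)$ (correct: $C=-(X\Phi_X+Y\Phi_Y)$ by Euler and $Z\nmid C$ since each $F_j(X,Y,0)$ is a nonzero power of a linear form), and classify the possible persistent common factors by their degree in $\alpha$. This buys a more self-contained and more geometric argument, one curve $C_i$ at a time (using only the integrality of the members of the individual pencils $\mathcal{P}_i$, i.e.\ Moh's theorem, rather than the big pencil and the Garc\'{\i}a Zamora formula), at the cost of a longer case analysis; the paper's specialization trick is shorter but leans on two external results and on the detour through rational exponents.

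One sub-case of your Step 3 is, however, justified incorrectly. In the second case ($G$ of $\alpha$-degree one, giving $\partial_XF_i\,\Psi_Y=\partial_YF_i\,\Psi_X$ for all $i$), you claim that sharing a tangent direction field together with irreducibility and one place at infinity forces each $C_i$ to be a line. That is false: $f_1=y-x^2$ and $f_2=y-x^2+1$ share the gradient direction $(-2x,1)$ and are not lines (this pair is of course excluded by Condition~(\ref{S}), but your intermediate claim is what fails). The correct way to close this case is to note that the relations force the Jacobian $\partial_xf_i\,\partial_yf_j-\partial_yf_i\,\partial_xf_j$ to vanish identically for all $i\neq j$ (after checking no $\partial_Yf_i$ vanishes identically, which would make $f_i$ a polynomial in $y$ alone and is excluded), hence $f_i$ and $f_j$ are algebraically dependent; by the classical lemma on polynomials with vanishing Jacobian there is $h\in\mathbb{C}[x,y]$ with $f_i,f_j\in\mathbb{C}[h]$, and irreducibility forces $f_i=a_ih+b_i$, $f_j=a_jh+b_j$, so $f_i-(a_i/a_j)f_j\in\mathbb{C}$, contradicting Condition~(\ref{S}). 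With this repair, and with the (terse but correct) argument in your first case that an $\alpha$-free common factor coprime to all $F_i$ would force $g^2\mid f_i-c$ for some $c$, contradicting the integrality of all affine members of $\mathcal{P}_i$, your proof is complete.
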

\begin{proof}
It is straightforward to check that $\prod_{i=1}^r f_i^{\alpha_i}$ is a first integral the vector field obtained by the restriction of $\Omega_{\mathcal{X}}$ to the affine plane. So, it only remains to prove that $\Omega_{\mathcal{X}}$ is reduced.

Let ${\mathcal{X}}'$ be the map which sends every element $\bar{\beta}=(\beta_1, \beta_2, \ldots,\beta_r)\in \mathbb{R}^r$ to the  vector field ${\mathcal{X}}'(\bar{\beta})$ of $\mathbb{CP}^2$ defined by the homogeneous 1-form
\begin{multline*}
\Omega(\beta_1, \beta_2, \ldots,\beta_r):=\left(\sum_{i=1}^r \beta_i Z \prod_{\substack{j=1 \\ j \neq i}}^r F_j \frac{\partial F_i}{\partial X}\right) dX+\left(\sum_{i=1}^r \beta_iZ \prod_{\substack{j=1 \\ j \neq i}}^r F_j\frac{\partial F_i}{\partial Y}\right) dY \\
+\left(\sum_{i=1}^r \beta_i Z \prod_{\substack{j=1 \\ j \neq i}}^r F_j\frac{\partial F_i}{\partial Z}-\left(\sum_{i=1}^r \beta_i \deg(F_i)\right)\prod_{j=1}^r F_j\right) dZ.
$$
\end{multline*}

Denote by $\mathbb{Q}^+$ the set of positive rational numbers. It is straightforward to check that, for each $\bar{\beta}\in (\mathbb{Q}^+)^r$,  the vector field $\mathcal{X}'(\bar{\beta})$ has $\prod_{i=1}^r F_i^{n_i}/Z^m$ as a first integral, where $n_i=e\beta_i$, $1 \leq i \leq r$, $e$ is the least common multiple of the denominators of the irreducible expressions of the rational numbers $\beta_1, \beta_2, \ldots, \beta_r$ and $m=\sum_{i=1}^r n_i\deg(F_i)$.

By \cite[Lemma 1 (iv)]{c-p-r-2}, the pencil of curves defined by the equations
$$\left\{\lambda_1 \prod_{i=1}^r F_i^{n_i} +\lambda_2 Z^m=0 \;\; \vert \;\;(\lambda_1:\lambda_2)\in \mathbb{CP}^1\right\}$$
has exactly two elements which are not integral (reduced and irreducible) curves. These curves are those with equations $\prod_{i=1}^r F_i^{n_i}=0$ and $Z^m=0$. We must recall that this result holds when the curves $F_i=0$ have only one place at infinity. Otherwise this number may be larger \cite{F2012}.

Now, $\Omega(\bar{\beta})$ is reduced for all $\bar{\beta}\in (\mathbb{Q}^+)^r$ because, otherwise, the formula relating the degrees of the pencil and the form, and the factorization of the remarkable curves \cite[Lemma 1.2]{GarciaZamora}, does not hold.

Consider now the function $ \omega({\bar{\beta}})$ which maps any element $\bar{\beta} \in \mathbb{R}^r$ to the affine $1$-form $\omega({\bar{\beta}})=a_{\bar{\beta}}(x,y)dx+b_{\bar{\beta}}(x,y)dy$, where
$$a_{\bar{\beta}}(x,y) :=\sum_{i=1}^r \beta_i \prod_{\substack{j=1 \\ j \neq i}}^r f_j\frac{\partial f_i}{\partial x}\;\mbox{ and }\;b_{\bar{\beta}}(x,y) : =\sum_{i=1}^r \beta_i \prod_{\substack{j=1 \\ j \neq i}}^r f_j\frac{\partial f_i}{\partial y}.$$

Regarding $a_{\bar{\beta}}(x,y)$ and $b_{\bar{\beta}}(x,y)$ as polynomials (with coefficients in a suitable ring) in the variable $x$ (respectively $y$), we can compute the resultant $\mathrm{Res}_x(a_{\bar{\beta}},b_{\bar{\beta}})$ (respectively $\mathrm{Res}_y(a_{\bar{\beta}},b_{\bar{\beta}})$). The fact that $\Omega(\bar{\beta})$ is reduced for vectors $\bar{\beta}\in (\mathbb{Q}^+)^r$ implies the same fact for $ \omega({\bar{\beta}})$, which proves that $\mathrm{Res}_x(a_{\bar{\beta}},b_{\bar{\beta}}) \in \mathbb{C}[\beta_1, \beta_2,\ldots,\beta_r][y]$ (respectively, $\mathrm{Res}_y(a_{\bar{\beta}},b_{\bar{\beta}}) \in \mathbb{C}[\beta_1, \beta_2, \beta_2,\ldots,\beta_r][x]$) is a nonzero polynomial. As a consequence, setting $\mathbb{R}^+$ the set of positive real numbers, the $1$-form $ \omega({\bar{\alpha}})$ is reduced for all values $\bar{\alpha} \in (\mathbb{R}^+)^r$ such that $\mathrm{Res}_x(a_{\bar{\alpha}},b_{\bar{\alpha}}) \neq 0$ and $\mathrm{Res}_y(a_{\bar{\alpha}},b_{\bar{\alpha}}) \neq 0$. This proves that the $1$-form $\Omega_{\mathcal{X}}$ is reduced for generic exponents $\alpha_1, \alpha_2, \ldots,\alpha_r \in \mathbb{R}^+$ and concludes the proof.
\end{proof}

Now we state one of our main results, which determines the configuration $\mathcal{E}_{\infty}(\mathcal X)$ corresponding to projective vector fields $\mathcal X$ having a DPWAI first integral with generic exponents.

\begin{teo}\label{teorema2}
Assume that $\mathcal{X}$ is the complex projectivization of a  polynomial vector field $\X$ having a DPWAI first integral as above whose exponents $\alpha_1, \alpha_2, \ldots,\alpha_r$ are generic. Then the following equality of configurations holds:
$$
\mathcal{E}_{\infty}(\mathcal X)=\bigcup_{i=1}^r  \left(\mathcal{BP}({\mathcal P}_i)\cup \{S_i\} \cup {\mathcal J}_i\right),
$$
where,  for $1 \leq i \leq r$, the $S_i$'s are the  points defined in Proposition \ref{previa} and each ${\mathcal J}_i=\{R_{1i},R_{2i},\ldots\}$ is an infinite chain of infinitely near points such that $R_{1i}$ (respectively, $R_{ji}$, $j \geq 2$) is a point that
belongs to the exceptional divisor obtained by blowing-up $S_i$ (respectively, $R_{j-1,i}$).
Moreover, the proximity graph of each chain $\{S_i\}\cup \mathcal{J}_i$ is  determined by the irrational number $\delta_i/\alpha_i$ and named ${\bf Prox}(\delta_i/\alpha_i)$, where
$$\delta_i : =\sum_{\substack{j=0 \\ j \neq i}}^r \alpha_j\rho_{ji},$$
and $\rho_{ji}$ are the integers $\sum_Q (C_j,C_i)_Q$ defined in (\ref{ro}).
\end{teo}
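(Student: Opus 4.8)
The plan is to localise the whole question at the points $S_i$ and to transfer the already-established rational picture of Proposition \ref{previa} to the real exponents by a deformation argument. First I would use Proposition \ref{prop}: for generic exponents $\Omega_{\mathcal{X}}$ has the explicit form stated there, and since every coefficient of $dX$ and $dY$ is divisible by $Z$, its restriction to the invariant line $Z=0$ is proportional to $\big(\prod_{j=1}^{r}F_j\big)\,dZ$. Hence the singular points of $\mathcal{X}$ lying on $Z=0$ are exactly the points $P_i=C_i\cap\{Z=0\}$, each the unique place at infinity of $C_i$. Over each $P_i$ the extended reduction must resolve the crossing of the two invariant curves $C_i$ and $Z=0$; this is achieved precisely by the blowing-ups centred at $\mathcal{BP}(\mathcal{P}_i)$, after which the strict transform of $C_i$ meets the last exceptional divisor $E_{Q_i}$ at the free point $S_i$, exactly as in Proposition \ref{previa}(a),(b),(d). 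By Remark \ref{NOTA} this part of the configuration does not depend on the exponents.

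The core step is to compute the local $1$-form of the strict transform of $\mathcal{X}$ at $S_i$. As the strict transform of $C_i$ (locally $u=0$) and the exceptional divisor $E_{Q_i}$ (locally $t=0$) are two distinct invariant curves through $S_i$, this form is diagonal, $\delta_i\,u\,dt-\eta_i\,t\,du$ for suitable constants. To identify $\delta_i$ and $\eta_i$ I would run the family $\Omega(\bar\beta)$ from the proof of Proposition \ref{prop}: its coefficients are linear in $\bar\beta=(\beta_1,\dots,\beta_r)$, and because the chain of blowing-ups from $\mathbb{CP}^2$ to $S_i$ is fixed, the two coefficients of the resulting local form at $S_i$ are again linear functions of $\bar\beta$. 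For $\bar\beta\in(\mathbb{Q}^+)^r$ the field $\mathcal{X}'(\bar\beta)$ has a WAI first integral, so Proposition \ref{previa}(b) evaluates them (up to the common factor $e$) as $\sum_{j\neq i}\beta_j\rho_{ji}$ and $\beta_i$. Two linear functions coinciding on the dense set $(\mathbb{Q}^+)^r$ are equal; specialising to $\bar\beta=\bar\alpha$ yields that the local form at $S_i$ is $\delta_i\,u\,dt-\alpha_i\,t\,du$ with $\delta_i=\sum_{j\neq i}\alpha_j\rho_{ji}$.

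Now the quotient of eigenvalues at $S_i$ equals $\delta_i/\alpha_i$, which by condition (4) of Definition \ref{DPWAI} is not a positive rational number; thus $S_i$ is a simple singularity with positive irrational quotient of eigenvalues, and the extended reduction keeps blowing it up. The successive blowing-ups of the diagonal singularity $\delta_i\,u\,dt-\alpha_i\,t\,du$ are carried out exactly as the resolution of the pencil $u^{n_i}=t^{\delta_i}$ used to prove Proposition \ref{previa}(b): at each stage one blows up the intersection of the two current invariant branches, the quotient of eigenvalues is transformed by one step of the Euclidean algorithm attached to $\delta_i/\alpha_i$, and the second point of each exceptional divisor receives a negative quotient and is therefore not a centre. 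The only difference with the rational case is that, $\delta_i/\alpha_i$ being irrational, this process never terminates: the centres form the single infinite chain $\{S_i\}\cup\mathcal{J}_i$ whose proximity graph is, by the construction of $\mathbf{Prox}$ for irrationals in Subsection \ref{proximitygraph}, exactly $\mathbf{Prox}(\delta_i/\alpha_i)$.

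It remains to assemble the pieces: the singularities of $\mathcal{X}$ over $Z=0$ are the $P_i$, their extended resolutions are the configurations $\mathcal{BP}(\mathcal{P}_i)\cup\{S_i\}\cup\mathcal{J}_i$, which separate beyond the $S_i$ by Proposition \ref{previa}(e), and no further centres appear, yielding the asserted description of $\mathcal{E}_{\infty}(\mathcal{X})$. I expect the main obstacle to be the continued-fraction computation of the previous paragraph: checking rigorously that the infinite reduction of the diagonal singularity reproduces $\mathbf{Prox}(\delta_i/\alpha_i)$, and in particular that at every stage exactly one of the two new singular points carries a positive (still irrational) quotient of eigenvalues, so that the chain neither stops nor branches. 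The deformation argument is what makes this tractable, since it reduces the identification of the transforming ratios to the already-understood rational resolution behind Proposition \ref{previa}(b).
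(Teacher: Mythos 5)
Your overall strategy coincides with the paper's: approximate the real exponent vector by rational ones, use the WAI picture of Proposition \ref{previa} for the approximating fields, and transfer the local data at the $S_i$ to the limit (your linearity-plus-density argument is a clean variant of the paper's passage to the limit of the local $1$-forms $e_{n}\beta_{in}t\,du-\delta^B_{in}u\,dt$). The description of the infinite chain over $S_i$ via the Euclidean algorithm attached to $\delta_i/\alpha_i$ is also the paper's final step, and condition (4) of Definition \ref{DPWAI} indeed supplies the irrationality that keeps the chain from terminating.

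The genuine gap is the inclusion $\mathcal{E}_{\infty}(\mathcal{X})\subseteq\bigcup_{i=1}^r\left(\mathcal{BP}({\mathcal P}_i)\cup\{S_i\}\cup{\mathcal J}_i\right)$, which you dispatch with ``no further centres appear.'' After blowing up the points of $\bigcup_{i=1}^r\mathcal{BP}({\mathcal P}_i)$ the exceptional locus carries singular points of the strict transform other than the $S_i$ (satellite points where exceptional components meet, points where they meet the strict transform of $Z=0$, possible singularities coming from other members of the pencils), and one must show that none of them has a positive real quotient of eigenvalues --- otherwise the extended reduction would continue or branch elsewhere and the asserted equality of configurations would fail. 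This is exactly the content of Lemma \ref{LEMA} of the paper (proved for the approximating fields $\mathcal{X}^B_n$ and transferred to $\mathcal{X}$ by the same limit argument), and its proof is not routine: for an arbitrary irreducible component $H\neq\{Z=0\}$ of a curve of the pencil one shows $\tilde{H}^2\geq 0$, since otherwise $[\tilde{H}]$ would span an extremal ray of the closed cone of curves, which is excluded by \cite[Theorem 3]{c-p-r-2}; one then deduces that $\tilde{H}$ is linearly equivalent to a general fibre of the morphism to $\mathbb{CP}^1$ eliminating the indeterminacy of the pencil, hence meets no exceptional component outside $\Gamma$, so that away from $\Gamma$ the local forms are $at_2dt_1+bt_1dt_2$ with $a,b\in \mathbb{Q}^+\cup\{0\}$ and the extended reduction stops. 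You single out the continued-fraction bookkeeping as the main obstacle, but that part is the routine one; this missing lemma is where the real work lies. A smaller related omission: genericity of $\bar{\alpha}$ is what guarantees that the limiting local equations along $\bigcup_{i=1}^r\mathcal{BP}({\mathcal P}_i)$ do not degenerate, so that this configuration is actually contained in $\mathcal{E}_{\infty}(\mathcal{X})$; Remark \ref{NOTA} alone, being a statement about pencils with integer exponents, does not give this.
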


\begin{proof}
Consider the vector $\bar{\alpha}=(\alpha_1, \alpha_2, \ldots,\alpha_r) \in (\mathbb{R}^{+})^r$ given by some generic elements $\{\alpha_i\}_{i=1}^r$. Let $B:= B_{\bar{\alpha}}=\{\bar{\beta}_n=(\beta_{1n}, \beta_{2n}, \ldots, \beta_{rn})\}_{n=1}^{\infty}$ be a sequence of vectors in $(\mathbb{Q}^+)^r$ such that $\lim_{n\rightarrow \infty} \bar{\beta}_n=\bar{\alpha}$ and, for each positive integer $n$, denote by $\mathcal{X}^B_n$ the vector field of $\mathbb{CP}^2$ given by the homogeneous 1-form $\Omega(\bar{\beta}_n)$ defined as in the proof of Proposition \ref{prop}. Notice that each vector field $\mathcal{X}^B_n$ has the following rational first integral:
\[
\frac{\prod_{i=1}^n F_i^{e_{n}\beta_{in}}}{Z^{m_{\bar{\beta}_n}}},
\]
where $e_{n}$ is the least common multiple of the denominators of the irreducible expressions of $\beta_{1n}, \beta_{2n}, \ldots,\beta_{rn}$ and
\[
m_{\bar{\beta}_n} := \sum_{i=1}^r e_{n}\beta_{in}\deg(F_i).
\]
Hence, the restriction of $\mathcal{X}^B_n$ to the affine chart given by the complement of the line $Z=0$ has a WAI first integral. Notice that the dicritical configurations of the vector fields $\mathcal{X}^B_n$, $n \geq 1$, contain the configuration of infinitely near points of $\mathbb{CP}^2$: $\bigcup_{i=1}^r  \left(\mathcal{BP}({\mathcal P}_i)\cup \{S_i\}\right)$ (see Remark \ref{NOTA}), and also that $$\mathcal{D}(\mathcal{X}^B_n)\cap \mathbb{CP}^2=\mathcal{E}_{\infty}(\mathcal X)\cap \mathbb{CP}^2$$
by the proof of Proposition \ref{prop}.

Let ${\mathcal P}^B_n$ be the pencil of curves defined by the equations
$$
\lambda_1 \prod_{i=1}^n F_i^{e_{n}\beta_{in}}+\lambda_2 Z^{m_{\bar{\beta}_n}}=0,
$$
where $(\lambda_1:\lambda_2)\in \mathbb{CP}^1$. By Proposition \ref{previa}, the configuration of base points of this pencil, which coincides with $\mathcal{D}({\mathcal X}^B_n)$ \cite[Corollary 1]{FGM}, is given by
 $$\bigcup_{i=1}^r  \left(\mathcal{BP}({\mathcal P}_i)\cup \{S_i\} \cup {\mathcal L}^B_{in}\right),$$
where $\{S_i\}\cup {\mathcal L}^B_{in}$ is the configuration of base points of the (local) pencil (at $S_i$) defined by the non-zero linear combinations of $u^{e_{n}\beta_{in}}$ and $t^{\delta^B_{in}}$, where $u=0$ (respectively, $t=0$) is a local equation of the strict transform of $C_i$ at $S_i$ (respectively, the exceptional divisor containing $S_i$) and
$$
\delta^B_{in}:=e_{n}\sum_{\substack{j=0 \\ j \neq i}}^r  \beta_{jn} \rho_{ji}.
$$
Notice that, since $(u,t)$ is a regular system of parameters of the local ring at $S_i$ \cite[Lemma 1]{c-p-r-2}, the proximity graph of the configuration $\{S_i\}\cup {\mathcal L}^B_{in}$ is $${\bf Prox}\left(\frac{\delta^B_{in}}{e_{n}\beta_{in}}\right).
$$

Now consider the configuration of infinitely near points of $\mathbb{CP}^2$ $$\mathcal{B}:=\bigcup_{i=1}^r  \left(\mathcal{BP}({\mathcal P}_i)\cup \{S_i\}\right).$$
The reduction of singularities of $\mathcal{X}$ shows that $ \mathcal{B} \subseteq \mathcal{E}_{\infty}(\mathcal{X})$ because $\bar{\alpha}$ is taken to be generic.  In addition, $S_1, S_2, \ldots, S_r$ are the unique common points of $\mathcal{E}_{\infty}(\mathcal{X})$ and the surface $V$ obtained by blowing-up the points in $\cup_{i=1}^r  \mathcal{BP}({\mathcal P}_i)$. This statement is a consequence of the forthcoming Lemma \ref{LEMA}, \cite[Lemma 1]{FGM} (where it is proved that under our conditions the operations ``blowing-up" and ``taking associated $1$-forms" commute) and the fact that the local equation of the strict transform of $\mathcal{X}$ at any point of $V$ is the limit (when $n$ tends to infinity) of the local equations of the strict transforms of $\mathcal{X}_n^B$.

To finish our proof and give a complete description of $\mathcal{E}_{\infty}(\mathcal{X})$. We notice that by \cite[Lemma 1]{FGM} the $1$-form $$
e_{n}\beta_{in}t\;du-\delta_{in}^Bu\; dt, \;\;\; 1 \leq i\leq r,
$$ locally defines the strict transform of ${\mathcal X}_n^B$ at $S_i$, where $u=0$ is a local equation of the strict transform of $C_i$ at $S_i$ and $t=0$ is a local equation of the exceptional divisor. Then, taking limits, the $1$-form $$t\;du-\frac{\delta_i}{\alpha_i}\; dt$$ defines the strict transform of $\mathcal{X}$ at $S_i$.  From this local expression of the vector field, it is straightforward to deduce  that the configuration of infinitely near points to $S_i$  belonging to ${\mathcal E}_{\infty}({\mathcal X})$ is an infinite chain whose proximity graph is $\bold{Prox}(\delta_i/\alpha_i)$. In fact, this chain is infinite because Condition (3) in Definition \ref{DPWAI} implies the irrationality of  $\delta_i/\alpha_i$ for all $i\in \{1, 2, \ldots,r\}$.
\end{proof}

We conclude this section by stating and proving the above used Lemma \ref{LEMA}, which allows us to prove that, with the notation as in the proof of the above theorem, the surface $V$ obtained by blowing-up the points in $\cup_{i=1}^r  \mathcal{BP}({\mathcal P}_i)$ has no points in ${\mathcal E}_{\infty}({\mathcal X})$ different from the $S_i$'s, $1 \leq i \leq r$.

The proof of Lemma \ref{LEMA} will require to use some objects of algebraic geometry which we briefly summarize for convenience of the reader. We will consider divisors on a surface $W$ and the {\it Ner\'on-Severi group} of $W$, NS$(W)$, which is the group of numerical (equivalently, linear, in our case) equivalence classes $[C]$ of divisors $C$ on $W$. Recall that two divisors in $W$ are linearly equivalent whenever its difference is principal \cite{hart}. One can transform this group into an $\mathbb{R}$-linear space by using tensorial product, NS$(W)\otimes \mathbb{R}$, and then the {\it cone of curves} of $W$ is defined as
\[
\mathrm{NE}(W) = \left\{ \sum a_i [C_i] \; \vert \; C_i \; \mbox{is a reduced and irreducible curve of $W$, $a_i\in \mathbb{R}$ and $a_i \geq 0$}
\right\}.
\]

The topological closure of $\mathrm{NE}(W)$ in NS$(W)\otimes \mathbb{R}$, $\overline{\mathrm{NE}(W)}$, is the so-called {\it closed cone of curves} of $W$. Extremal rays of cones  $\overline{\mathrm{NE}(X)}$ of algebraic varieties $X$ are crucial objects in the model minimal program in algebraic geometry \cite{mori82, kawamata84, hacon10}.

\begin{lem}
\label{LEMA}
Keep the notation as in Theorem \ref{teorema2} and its proof. Let $\pi: W \rightarrow \mathbb{CP}^2$ be the composition of the blowing-ups centered at the points of the dicritical configuration, $\mathcal{D}(\mathcal{X}_n^B)$, of the vector field of $\mathbb{CP}^2$ defined by $\Omega(\bar{\beta}_n)$. Consider the set $\{Q_1, Q_2, \ldots,Q_r\}$ of maximal points of $\mathcal{D}(\mathcal{X}_n^B)$ and the union $\Gamma=\bigcup_{i=1}^r \tilde{E}_{Q_i}$ of the strict transforms on $W$ of the exceptional divisors obtained by blowing-up the points $Q_i$, $1\leq i \leq r$.

Then, the singularities of the strict transform of $\mathcal{X}_n^B$ on $W$ which belong to the exceptional locus of $\pi$, but not to $\Gamma$, are simple and the local equation of $\mathcal{X}_n^B$ at any of them has the form $$at_2dt_1+bt_1dt_2,$$ where $a,b\in \mathbb{Q}^+\cup \{0\}$, $a + b \neq 0$ and, $t_1=0$ (respectively, $t_2=0$) is the local equation, at the singularity, of the strict transform of an irreducible exceptional divisor (respectively, the line at infinity $Z=0$).
\end{lem}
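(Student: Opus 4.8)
The plan is to use that $\mathcal X_n^B$ has the rational first integral $\Phi=\prod_{i=1}^r F_i^{a_i}/Z^{m}$ (writing $a_i=e_n\beta_{in}$ and $m=m_{\bar\beta_n}$), so that $\mathcal X_n^B$ is the foliation defined by the closed logarithmic $1$-form
\[
\omega=\sum_{i=1}^r a_i\,\frac{dF_i}{F_i}-m\,\frac{dZ}{Z}.
\]
By \cite[Corollary 1]{FGM} the configuration $\mathcal D(\mathcal X_n^B)$ is the cluster of base points of the pencil $\langle\prod_i F_i^{a_i},Z^{m}\rangle$; hence $\pi$ turns $\Phi$ into a morphism $f\colon W\to\mathbb{CP}^1$ whose (necessarily disjoint) fibres are the members of the resolved pencil, and $\mathcal X_n^B$ pulls back to the foliation tangent to them. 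First I would record that the polar divisor of $\pi^*\omega$ is the total transform of $\{Z\prod_iF_i=0\}$, supported on $\tilde Z$, the $\tilde C_i$ and the exceptional divisors $E_P$, with residue $a_i>0$ along $\tilde C_i$, residue $-m<0$ along $\tilde Z$, and residue $\rho_P=\mathrm{mult}_P(\prod_i F_i^{a_i})-m\,\mathrm{mult}_P(Z)=v_P(\Phi)$ along $E_P$.

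Next I would fix the geometry of the exceptional locus over each point at infinity $P_i$ of $C_i$. Here the hypothesis that $C_i$ has only one place at infinity is essential: combined with Proposition \ref{previa} and the description of the cluster of base points in \cite{c-p-r-2}, it shows that the total transform of $Z\prod_iF_i$ has normal crossings along the exceptional locus, the divisors $E_P$ forming chains of smooth rational curves to which $\tilde Z$ and the $\tilde C_i$ are attached. In each chain the unique divisor with vanishing residue is the maximal one $E_{Q_i}$, which is exactly the dicritical divisor removed in $\Gamma$; every other exceptional divisor has $\rho_P\neq0$ and is therefore a component of a single fibre of $f$ (over $\Phi=0$ when $\rho_P>0$, over $\Phi=\infty$ when $\rho_P<0$) of multiplicity $|\rho_P|$. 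Since $f$ is a morphism its fibres are pairwise disjoint, so once $\Gamma$ is deleted any two components that meet lie in the same fibre. Because $\mathcal X_n^B$ is tangent to the fibres it is regular at every smooth point of a (possibly multiple) fibre component, so its singularities on the exceptional locus, away from $\Gamma$, occur precisely at the double points (corners) of the fibres.

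Then comes the local computation. At a corner where two fibre components $t_1=0$ and $t_2=0$ of multiplicities $\mu_1,\mu_2$ meet, writing $f$ (or $1/f$ on the fibre over $\infty$) as $t_1^{\mu_1}t_2^{\mu_2}\cdot(\mathrm{unit})$ shows that, up to a unit and a non-zero constant, the reduced $1$-form of $\mathcal X_n^B$ is
\[
\mu_1\,t_2\,dt_1+\mu_2\,t_1\,dt_2 ,
\]
which is the asserted shape $a\,t_2\,dt_1+b\,t_1\,dt_2$ with $a=\mu_1$ and $b=\mu_2$ the two multiplicities; these lie in $\mathbb{Q}^+\cup\{0\}$ and are not both zero. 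As the two branches lie in one fibre their residues share a sign, so the quotient of eigenvalues is $-\mu_2/\mu_1\in\mathbb{Q}^-\cup\{0,\infty\}$, which is never in $\mathbb{Q}^+$; hence each of these singularities is simple. Finally, reading off the meeting components from the chain structure of the previous step identifies the two branches: one of them, $t_1=0$, is always an irreducible exceptional divisor, and for the corners lying on the fibre over $\Phi=\infty$ that touch the line at infinity the other branch is $\tilde Z$, giving the normal form with $t_2=0$ an equation of $\tilde Z$ and $b=m$.

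The hard part will be the second step, i.e.\ establishing the normal-crossings chain structure of the exceptional locus and, within each chain, singling out the one dicritical divisor $E_{Q_i}$ (the divisor with $\rho_P=0$), so that after deleting $\Gamma$ no corner meets a dicritical divisor and every remaining corner sits inside a single fibre. This is precisely where the one-place-at-infinity hypothesis and the explicit form of $\mathcal D(\mathcal X_n^B)$ from Proposition \ref{previa} are used, and where the intersection theory recalled above (the cone of curves $\overline{\mathrm{NE}(W)}$ and the contraction of the non-dicritical exceptional divisors by $f$) can be invoked to make rigorous the claim that every $E_P$ with $\rho_P\neq0$ is contracted into a fibre. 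Once this is in place, the monomial computation and the sign bookkeeping that yield simplicity are routine.
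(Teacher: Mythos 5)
Your proof is essentially correct but follows a genuinely different route from the paper's. The paper reduces the lemma to showing that the strict transform $\tilde H$ of any irreducible component $H\neq Z, C_i$ of any member of the pencil ${\mathcal P}_n^B$ misses the exceptional locus outside $\Gamma$; it proves this by pure intersection theory on $W$: $\tilde H^2\geq 0$ (otherwise $[\tilde H]$ would span an extremal ray of $\overline{\mathrm{NE}(W)}$, excluded by \cite[Theorem 3]{c-p-r-2}), $D^2=D\cdot\tilde H=0$ for a general fibre $D$, hence $\tilde H$ is linearly equivalent to $D$ by \cite[Lemma III.9]{beau}, and therefore $\tilde H\cdot E=D\cdot E=0$ for every exceptional $E\not\subseteq\Gamma$. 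You instead work with the closed logarithmic form $\sum a_i\,dF_i/F_i-m\,dZ/Z$ and the morphism $f:W\to\mathbb{CP}^1$: the non-$\Gamma$ exceptional divisors have nonzero residue, so they sit inside the fibres over $0$ or $\infty$, disjointness of fibres of a morphism confines everything near such a divisor to one special fibre whose support is explicitly $\tilde Z\cup\bigcup\tilde C_i\cup(\mbox{exceptional divisors})$, and the monomial local form of $f$ at a normal-crossings corner yields the normal form and the simplicity of the singularity directly. Your route buys an explicit local model (the normal form falls out of $d\log f$ rather than being asserted at the end), while the paper's buys the stronger global statement $\tilde H\sim D$, which disposes of possible extra components of non-generic pencil members without invoking \cite[Lemma 1 (iv)]{c-p-r-2}. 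One correction of attribution: the fact you isolate as ``the hard part'' --- that within each chain the unique divisor with vanishing residue is $E_{Q_i}$, equivalently that the components of $\Gamma$ are the only exceptional divisors not contracted by $f$ --- is not where the cone-of-curves machinery enters; it is exactly the content of \cite[Lemma 1 (iii)]{c-p-r-2}, which the paper cites for this purpose, the extremal-ray argument being reserved for the auxiliary curves $\tilde H$. With that citation in place your argument closes, and the normal-crossings structure of the total transform of $Z\prod_i F_i$ is supplied by the resolution of the one-place-at-infinity singularities together with Proposition \ref{previa}, as you anticipate.
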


\begin{proof} First of all notice that, by  Proposition \ref{previa}, the dicritical configuration of $\mathcal{X}_n^B$ and the base points configuration of  $\mathcal{P}_n^B$ coincide; i.e., $\mathcal{D}(\mathcal{X}_n^B)=\mathcal{BP}(\mathcal{P}_n^B)$.

Let $H$ be an arbitrary irreducible component of a curve of the pencil $\mathcal{P}_n^B$, different from $Z=0$, and denote by $\tilde{H}$ its strict transform on $W$. To prove the lemma, it is enough to show that, outside $\Gamma$, $\tilde{H}$ has no singularity on the exceptional locus of $\pi$.

We can assume $H \neq C_i$ for all $i \in \{1, 2, \ldots, r\}$  because $\pi$ is a common resolution of the singularities at infinity of these curves (see \cite[Lemma 1 (iii)]{c-p-r-2} and \cite{pa2, Mo}).

Notice that the self-intersection of $\tilde{H}$ cannot be negative because, in this case, the class of $\tilde{H}$ in $NS(W)\otimes \mathbb{R}$ would generate an extremal ray of the closed cone of curves of $W$  \cite[Lemma 1.22]{kollar}, and this is not possible by \cite[Theorem 3]{c-p-r-2}. Hence, $\tilde{H}^2\geq 0$.

The pencil $\mathcal{P}_n^B$ defines a rational map $\mathbb{CP}^2\cdots \rightarrow \mathbb{CP}^1$, that is a morphism from an open set of $\mathbb{CP}^2$ to $\mathbb{CP}^1$ which cannot be extended to any larger open set. The elimination of its indeterminacies induces a morphism $f:W\rightarrow \mathbb{CP}^1$ contracting to a point the strict transform of any curve in the pencil \cite[Theorem II.7]{beau}. In particular, it contracts $\tilde{H}$. The morphism $f$ is defined by global sections of the invertible sheaf \cite{hart} associated to the divisor $D$ on $W$ given by the strict transform of a general curve of the pencil $\mathcal{P}_n^B$. Notice that $D^2=0$ because the strict transforms on $W$ of two different general curves of the pencil do not meet. Then, $$D\cdot \tilde{H}=0$$ (and $\tilde{H}^2\geq 0$ by the previous paragraph). Hence $\tilde{H}$ must be linearly equivalent to $D$ \cite[Lemma III.9]{beau}.

To conclude the proof, we take into account that the unique exceptional irreducible divisors which are not contracted by the morphism $f$ are the irreducible components of $\Gamma$ \cite[Lemma 1 (iii)]{c-p-r-2}, and this implies that, if $E$ is the strict transform of an irreducible exceptional divisor, then $$\tilde{H}\cdot E=D\cdot E>0$$ if and only if $E$ is contained in $\Gamma$. This proves that $\tilde{H}$ can only have singularities on $\Gamma$ and the lemma.

\end{proof}

\section{The algorithm}
\label{Alg}
In this last section we state the previously mentioned algorithm which computes a first integral of planar polynomial vector fields having a DPWAI first integral with generic exponents.

Darboux proved in \cite{dar} that if a polynomial vector field $\mathbf{X}$ of degree $d$ has at least $\binom{d + 1}2+1$ invariant algebraic curves, then it has a Darboux first integral, which can be computed using these invariant algebraic curves.  This result was improved in \cite{CL2} (see also \cite{Che}). Next we state the Darboux theorem.

\begin{teo}\label{tDar}
Suppose that a polynomial differential system $\mathbf{X}$ as in \eqref{e1} of degree $d$ admits $r$ irreducible invariant algebraic curves $f_i(x,y)=0$ with respective cofactor $k_i(x,y)$, $1 \leq i \le r$. Then:
\begin{enumerate}
\item[{\rm(a)}] There exist $\lambda_i\in \mathbb{C}$, not all zero, such that
    \begin{equation}
    \label{gege}
    \mathop{\sum}\limits_{i=1}^r\lambda_i k_i(x,y)=0
   \end{equation}
     if and only if the function
\begin{equation}\label{idar}
H=f_1^{\la_1}\cdots f_p^{\la_r}
\end{equation}
is a first integral of the system $\mathbf{X}$.

\item[\rm(b)] If $r=\binom {d+1}2+1$, then there exist $\lambda_i\in\C$, not all zero, such that $\mathop{\sum}\limits_{i=1}^r\lambda_i k_i(x,y)=0$.
\end{enumerate}
\end{teo}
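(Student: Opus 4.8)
The plan is to prove Theorem \ref{tDar}, the classical Darboux integrability theorem, which relates the existence of a linear dependence among cofactors to the existence of a Darboux first integral. I would structure the proof around the fundamental identity governing how the logarithmic derivative of a candidate first integral interacts with the vector field.

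First I would establish part (a). The key observation is that if $f_i(x,y)=0$ is an invariant algebraic curve with cofactor $k_i$, then by definition $\mathbf{X} f_i = k_i f_i$, where $\mathbf{X} = p\,\partial_x + q\,\partial_y$. For the candidate function $H = f_1^{\lambda_1}\cdots f_r^{\lambda_r}$, I would compute $\mathbf{X} H$ formally (treating $H$ as a multi-valued function where necessary). Using the product and chain rules, one obtains
\begin{equation*}
\mathbf{X} H = H \cdot \sum_{i=1}^r \lambda_i \frac{\mathbf{X} f_i}{f_i} = H \cdot \sum_{i=1}^r \lambda_i k_i.
\end{equation*}
Since $H$ is not identically zero (on the complement of the curves $f_i=0$), the equation $\mathbf{X} H = 0$, which characterizes $H$ being a first integral, holds if and only if $\sum_{i=1}^r \lambda_i k_i = 0$. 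This gives the equivalence in part (a) directly. I would need to be careful about the multi-valued nature of $H$ and argue that the logarithmic derivative computation is valid wherever $H$ is defined; the cleanest formulation is to work with $\mathbf{X}(\log H) = \sum_i \lambda_i k_i$ on the open set where all $f_i \neq 0$.

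Next I would prove part (b), which is the genuinely quantitative statement and the part I expect to be the main obstacle. The idea is a dimension count. Each cofactor $k_i$ is a polynomial of degree at most $d-1$ in the variables $x,y$. The space of polynomials in two variables of degree at most $d-1$ has dimension $\binom{d+1}{2}$, since it is spanned by the monomials $x^a y^b$ with $a+b \leq d-1$, of which there are exactly $\binom{d+1}{2}$. Therefore the $r = \binom{d+1}{2}+1$ cofactors $k_1, \dots, k_r$ are $r$ vectors living in a vector space of dimension $\binom{d+1}{2} = r-1$. Any collection of $r$ vectors in an $(r-1)$-dimensional space is linearly dependent, so there exist constants $\lambda_i \in \mathbb{C}$, not all zero, with $\sum_{i=1}^r \lambda_i k_i = 0$. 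This establishes part (b), and combined with part (a) it yields a Darboux first integral.

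The main subtlety to address carefully is the degree bound on the cofactors and the precise dimension of the ambient space, since this is where the exact value $\binom{d+1}{2}+1$ enters. The earlier discussion in the preliminaries confirms that an invariant algebraic curve of the degree-$d$ system has a cofactor of degree at most $d-1$, so the space of possible cofactors is $\mathbb{C}_{d-1}[x,y]$ (affine, two variables), of dimension $\binom{d+1}{2}$. I would remark that this is the only place the hypothesis on $r$ is used, and that part (a) requires no assumption on the number of curves. I would also note that the result as stated follows from \cite{dar} with the sharpened bound from \cite{CL2}; a full self-contained treatment would simply present the dimension count above, which is elementary once the degree bound on cofactors is in hand.
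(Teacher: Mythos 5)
The paper does not actually prove Theorem \ref{tDar}; it states it as a classical result and cites \cite{dar} and \cite{CL2} for the proof. Your proposal supplies the standard argument and it is correct: part (a) follows from the logarithmic-derivative identity $\mathbf{X}H = H\sum_{i=1}^r \lambda_i k_i$ together with the fact that $H$ is nonvanishing on a dense open set, and part (b) follows from the dimension count, since the cofactors of a degree-$d$ system have degree at most $d-1$ and hence live in a space of dimension $\binom{d+1}{2} = r-1$, forcing a nontrivial linear dependence among the $r$ of them. This is exactly the proof the cited sources give, so there is nothing to object to.
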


 As a consequence of the above result, finding invariant algebraic curves is an important tool in the study of Darboux integrability; notice also that it is  a very hard problem.

Next we present Theorem \ref{teorema3} which, together with some previous results, allows us to state our algorithm. This algorithm provides enough invariant curves to apply Theorem \ref{tDar} and determine a Darboux first integral in case the input we supply is a polynomial vector field having a DPWAI first integral with generic exponents.

The mentioned theorem will use a cluster $({\mathcal K},{\bf m}_{{\mathcal K}})$ attached to any finite chain ${\mathcal K}$ of infinitely near points of $\mathbb{CP}^2$. Recall that a configuration ${\mathcal K} = \{Q_1,Q_2, \ldots, Q_s\}$ is a chain if $Q_i$, $i > 1$, belongs to the exceptional divisor created by $Q_{i-1}$. The sequence of positive integers $\bold{m}_{\mathcal K} := (m_{Q})_{Q\in {\mathcal K}}$ is defined as follows: $m_Q=1$ if $Q$ is the maximal point of ${\mathcal K}$ and $m_Q=\sum m_R$ otherwise, where the sum is taken over the set of points $R$ in ${\mathcal K}$ which are proximate to $Q$. Also, given an arbitrary configuration ${\mathcal C}$ and any point $Q\in {\mathcal C}$, we shall denote by ${\mathcal C}_Q$ the finite chain defined by those points $R$ in ${\mathcal C}$ such that $Q$ is infinitely near to $R$.

\begin{teo}
\label{teorema3}
Let $\mathcal X$ be a projective vector field and $\alpha_1, \alpha_2, \ldots,\alpha_r$ real numbers as in Theorem \ref{teorema2}. Keep the above notation and consider the clusters
${\mathcal K}_i:=({\mathcal C}_i,{\bf m}_{{\mathcal C}_i})$, $1 \leq i \leq r$, where ${\mathcal C}_i=\{L_{i0},L_{i1},\ldots, L_{i{\ell}_i}:=S_i\}:={\mathcal C}_{S_i}=\mathcal{BP}({\mathcal P}_i)\cup \{S_i\}$. Set ${\bf m}_{{\mathcal C}_i}=(m_Q)_{Q\in {\mathcal C}_i}$.
Then the following equalities hold:
\begin{itemize}
\item[(i)] $[\deg(C_i)]^2-\sum_{j=0}^{\ell_i} m_{L_{ij}}^2=-1$, and
\item[(ii)] $\deg(C_i)=\sum_{j=0}^{\kappa_i} m_{L_{ij}}$,
\end{itemize}
where $\kappa_i$ denotes the maximum index $j$ such that the strict transform of the line at infinity passes through $L_{ij}$. Moreover, $C_i$ is the unique curve in the linear system ${\mathcal L}_{\sum_{j=1}^{\kappa_i} m_{L_{ij}}}({\mathcal K}_i)$.
\end{teo}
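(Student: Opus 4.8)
The plan is to prove the three assertions in a specific order, exploiting the fact that the configuration ${\mathcal C}_i = \mathcal{BP}({\mathcal P}_i)\cup\{S_i\}$ arises as an embedded resolution of the singularity at infinity of the curve $C_i$, a curve with only one place at infinity. The key structural input is Proposition \ref{previa}(a),(b): the chain ${\mathcal C}_i$ is exactly the base-point configuration of the pencil ${\mathcal P}_i$ generated by $F_i$ and $Z^{d_i}$, and $S_i$ is the single point where the strict transform of $C_i$ meets the last exceptional divisor $E_{Q_i}$. Since the multiplicities $m_Q$ of ${\bf m}_{{\mathcal C}_i}$ are defined recursively by the proximity equalities (value $1$ at the maximal point $S_i$, and $m_Q=\sum m_R$ over points $R$ proximate to $Q$), these $m_Q$ coincide with the virtual multiplicities of the generic member of the pencil ${\mathcal P}_i$, which in turn equal the multiplicities of the strict transform of $C_i$ along its resolution.

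First I would establish (i). The identity $[\deg(C_i)]^2 - \sum_{j} m_{L_{ij}}^2 = -1$ is the statement that the strict transform $\tilde{C}_i$ of $C_i$ on the surface $Y$ obtained by blowing up ${\mathcal C}_i$ has self-intersection $-1$. Indeed, on $\mathbb{CP}^2$ one has $C_i^2 = \deg(C_i)^2$, and each blow-up at $L_{ij}$ drops the self-intersection of the strict transform by $m_{L_{ij}}^2$; hence $\tilde{C}_i^2 = \deg(C_i)^2 - \sum_j m_{L_{ij}}^2$. The fact that this equals $-1$ is precisely the characterization that $\tilde{C}_i$ becomes a smooth rational curve meeting $E_{Q_i}$ transversally at the single free point $S_i$ — equivalently, that ${\mathcal C}_i$ is a \emph{minimal} embedded resolution of a unibranch (one-place) singularity, so that $\tilde{C}_i$ is a $(-1)$-curve. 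This is exactly the content of the one-place-at-infinity resolution results cited from \cite{pa2,Mo} and \cite[Lemma 1]{c-p-r-2}; I would invoke them directly rather than reprove the resolution theory.

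Next, for (ii), I would use that the line at infinity $Z=0$ is the \emph{other} remarkable curve of the pencil ${\mathcal P}_i$, meeting $C_i$ only at the point $P$ at infinity with high contact. The strict transform of $Z=0$ passes through the initial points $L_{i0},\ldots,L_{i\kappa_i}$ of the chain and then separates from the resolution; its multiplicities along those points are all $1$ (the line is smooth). The intersection number $C_i \cdot \{Z=0\} = \deg(C_i)$ computed on $\mathbb{CP}^2$ must equal the total contact accumulated along the shared infinitely near points, namely $\sum_{j=0}^{\kappa_i} m_{L_{ij}}\cdot 1 = \sum_{j=0}^{\kappa_i} m_{L_{ij}}$, since the virtual multiplicity of $C_i$ at $L_{ij}$ is $m_{L_{ij}}$ and that of the line is $1$. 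This is the Noether-type intersection formula on blow-ups, and $\kappa_i$ being the last index through which the line passes is exactly where the two strict transforms separate.

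Finally, for the uniqueness statement, set $e_i := \sum_{j=1}^{\kappa_i} m_{L_{ij}}$ and consider the linear system ${\mathcal L}_{e_i}({\mathcal K}_i)$ of degree-$e_i$ curves passing virtually through the cluster ${\mathcal K}_i$. I would argue by a dimension count: using (i) and (ii) together with the defining relation $\deg(C_i) = \sum_{j=0}^{\kappa_i} m_{L_{ij}}$, one checks that $C_i$ (or rather the appropriate member realizing these multiplicities) lies in this system, and that the expected dimension, computed via the virtual conditions $\sum_Q \binom{m_Q+1}{2}$ against $\binom{e_i+2}{2}$, forces the system to be a single curve. The main obstacle I anticipate is bookkeeping the precise degree normalization: verifying that the curve forced to have these multiplicities is genuinely $C_i$ and not some reducible configuration, which is where the condition that $C_i$ has \emph{only one place at infinity} (ensuring irreducibility and the $-1$ self-intersection) must be used decisively, via \cite[Theorem 3]{c-p-r-2} ruling out extremal negative-self-intersection competitors. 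I would close by noting that these numerical identities are the classical characterizations of one-place-at-infinity curves and cite \cite{c-p-r-2,pa2,Mo} for the steps I do not reprove.
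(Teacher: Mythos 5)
Your treatments of (i) and (ii) are essentially the paper's argument in different clothing: the paper derives both at once from the fact that, on the surface obtained by blowing up $\mathcal{BP}({\mathcal P}_i)$, the strict transforms of two general members of the pencil ${\mathcal P}_i$ and of the line at infinity are pairwise disjoint, and the former are linearly equivalent to the strict transform of $C_i$; this gives $\tilde{C}_i^2=0$ on that surface, hence $-1$ after the extra blow-up at $S_i$, and $\tilde{C}_i\cdot\tilde{L}=0$, which are exactly (i) and (ii). One caveat on your justification of (i): being a minimal embedded resolution of a unibranch singularity does not by itself make the strict transform a $(-1)$-curve (a smooth plane curve of degree $d$ has self-intersection $d^2$). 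What forces $\tilde{C}_i^2=-1$ is the pencil structure recorded in Proposition \ref{previa}: the general member of ${\mathcal P}_i$ is equivalent to $\tilde{C}_i$ and separates from it precisely at $S_i$. That is the fact you should invoke, and it is what the paper uses directly.

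The uniqueness claim, however, has a genuine gap. The quantity $\binom{e_i+2}{2}-1-\sum_Q\binom{m_Q+1}{2}$ is only a \emph{lower} bound for the dimension of ${\mathcal L}_{e_i}({\mathcal K}_i)$: the linear conditions imposed by a cluster need not be independent, so an expected dimension equal to $0$ (or negative) cannot force the system to consist of a single curve. In fact, combining (i) with adjunction shows that this expected dimension equals minus the arithmetic genus of $\tilde{C}_i$, hence is $\leq 0$, while the system certainly contains $C_i$; since curves with only one place at infinity need not be rational, the imposed conditions are in general visibly dependent and the count gives no upper bound at all. The paper's argument is the correct one and is immediate from (i) and B\'ezout: two distinct curves of degree $\deg(C_i)$ passing virtually through ${\mathcal K}_i$ and sharing no component would meet with intersection number at least $\sum_Q m_{Q}^2=[\deg(C_i)]^2+1>[\deg(C_i)]^2$, a contradiction. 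You should replace the dimension count by this B\'ezout argument; the appeal to \cite[Theorem 3]{c-p-r-2} about extremal rays is not needed here.
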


\begin{proof}

Let $U$ be the surface obtained after blowing-up the points in $\mathcal{BP}({\mathcal P}_i)$. Consider the line at infinity $L$ and two different general curves $\Delta_1$ and $\Delta_2$ of the pencil ${\mathcal P}_i$. Then, their strict transforms on $U$, $\tilde{L}$, $\tilde{\Delta}_1$ and $\tilde{\Delta}_2$, do not meet. Therefore $\tilde{\Delta}_1\cdot \tilde{\Delta}_2=0$ and $\tilde{\Delta}_1\cdot \tilde{L}=0$. Both equalities prove, respectively, the equalities (i) and (ii) after noticing that $\tilde{\Delta}_1$ and $\tilde{\Delta}_2$ are linearly equivalent to the strict transform of $C_i$.

We conclude the proof by noticing that our last assertion follows from (i) and B\'ezout Theorem \cite[I.9 (a)]{beau}.
\end{proof}

Now we present our algorithm which, applied to a polynomial vector field, computes  candidates to be the polynomials $f_1, f_2, \ldots, f_r$ appearing in a DPWAI first integral. Theorems \ref{teorema2} and \ref{teorema3} prove that when the input has a DPWAI first integral with generic exponents, the output will be the mentioned polynomials $f_i$.

We will need the following notation: given an arbitrary configuration ${\mathcal C}$ we define, for each maximal point $Q$ of ${\mathcal C}$, the integer $I_{Q}({\mathcal C}) :=d_{Q}({\mathcal C})^2-\sum_{P\in {{\mathcal C}}_Q} m_P^2$, where ${\bf m}_{{\mathcal C}_Q}=(m_P)_{P\in {\mathcal C}_Q}$ and $d_{Q}({\mathcal C}):=\sum m_P$, the sum being taken over the points $P$ in $\mathcal C_Q$ such that the strict transform of the line at infinity passes through $P$.

\begin{alg}\label{alg1}

{\rm

\begin{itemize} $\;$

\item \emph{Input:} An arbitrary polynomial vector field $\mathbf{X}$.

\item \emph{Output:} Either a finite set $\{f_i(x,y)\}_{i=1}^r$ of polynomials in two variables which are candidates for applying Theorem \ref{tDar} and obtaining a Darboux first integral, or $0$.

\end{itemize}
\medskip

\begin{enumerate}
 \item Compute an homogeneous 1-form defining the complex projectivization $\mathcal{X}$ of $\mathbf{X}$.
 \item Compute the set $\Omega'$ consisting of the points $Q$ in the singular configuration ${\mathcal S}({\mathcal X})$ which are infinitely near to a point of the line at infinity.
 \item Let $Q_1, Q_2, \ldots,Q_{\ell}$ be the maximal points of $\Omega'$. For every $i\in \{1,2,\ldots,\ell\}$ compute the maximal
 configuration $\Omega^i$ of points $P$ infinitely near to $Q_i$ satisfying the following conditions:
 \begin{itemize}
 \item[(a)] $P$ is free,
 \item[(b)] $P\in {{\mathcal E}_{\infty}({\mathcal X})}$ (that is, $P$ is a simple singularity of the strict transform of the vector field $\mathcal X$ whose associated quotient of eigenvalues is a positive irrational number),
 \item[(c)] $I_P({{\mathcal E}_{\infty}({\mathcal X})})\geq -1$.
 \end{itemize}
 If $\Omega^i$ is empty for some $i\in \{1,2,\ldots,\ell\}$, then return $0$. Else, define $\Omega:=\Omega' \cup \Omega^1\cup \cdots \cup \Omega^{\ell}$ and go to Step (4).

\item Let $M=\{S_1, S_2, \ldots, S_r\}$ be the set of maximal points of $\Omega$. If  $I_{S_i}({{\mathcal E}_{\infty}({\mathcal X})})\neq -1$ for some $i \in \{1,2,\ldots,r\}$ then return 0. Else go to Step (5).

\item If the linear systems ${\mathcal L}_{d_{S_i}}({\Omega}_{S_i},\bold{m}_{\Omega_{S_i}})$ have projective dimension $0$ for all $i\in \{1, 2, \ldots, r\}$ (where $d_{S_i}:=d_{S_i}({{\mathcal E}_{\infty}({\mathcal X})})$ ), then return $$\{F_1(x,y,1), F_2(x,y,1), \ldots, F_r(x,y,1)\},$$ $F_i(X,Y,Z)$ being an homogeneous polynomial defining the unique curve in $${\mathcal L}_{d_{S_i}}({\Omega}_{S_i},\bold{m}_{\Omega_{S_i}}).$$ Else, return $0$.

\end{enumerate}
}
\end{alg}

Our procedure to decide about DPWAI integrability of a vector field $\X$ has two steps. First we run Algorithm \ref{alg1} with input $\mathbf{X}$ and, when the output is not $0$, we get $r$ candidates to be invariant algebraic curves of $\mathbf{X}$  given by equations $f_i=0$, $1 \leq i \leq r$. When these curves are invariant by $\mathbf{X}$, we compute their cofactors
\[
k_i=\frac{p\pd {f_i}x+q\pd{f_i}y}{f_i},
\]
and then, we   check whether there exist values $\lambda_i\in\mathbb{R}^+$, $1 \leq i \leq r$, satisfying Equality (\ref{gege}). Notice that we only need to solve a homogeneous linear system of equations and  this linear system has  $\binom {d+1}2$ equations, corresponding with the number of monomials of a polynomial of degree $d-1$ in two variables, and $r$ unknowns, say the $\lambda_i$'s. If such values $\lambda_i$ exist, then \eqref{idar} is a first integral of the system $\mathbf{X}$.

When the input $\X$ has a DPWAI first integral with generic exponents, we also obtain its extended resolution of singularities over the line at infinity. Otherwise,
$\X$  has not a DPWAI first integral with generic exponents, the output of Algorithm \ref{alg1} could be $0$ or some not necessarily invariant curves by $\mathbf{X}$. However, the output of Algorithm \ref{alg1} could also provide enough invariant curves and then, we would obtain a DPWAI first integral by means of Theorem \ref{tDar}.\\

We conclude this paper with an example where we detail our procedure to determine a Darboux first integral of the mentioned class of polynomial vector fields.

\begin{exam}
{\rm
Consider the polynomial vector field $${\bf X}=a(x,y)dx+b(x,y)dy,$$ where
$$a(x,y)=(3 +4 \pi ) x^6 y^2+(3 +\sqrt{2}+4 \pi ) x^7 +4 \pi  x^3 y^3 +(\sqrt{2} +4 \pi)  x^4 y-3 x^2 y^3 -(3 +\sqrt{2}) x^3 y -\sqrt{2} y^2$$
and
$$b(x,y)=2 \sqrt{2} x^7 y +(1+2 \sqrt{2}) x^4 y^2 +x^5 -(2 \sqrt{2} +\pi)  x^3 y^2 -\pi  x^4 -(1+2 \sqrt{2}+\pi ) y^3 -(1+\pi)  x y.$$




Algorithm \ref{alg1} gives rise to a configuration of infinitely near points of $ \mathbb{CP}^2$, $\Omega=\{P_i\}_{i=1}^{18}$, which has 3 maximal points: $$S_1=P_9, \; \;S_2=P_{13}\;\; \mathrm{and}\;\; S_3=P_{18}.$$
The proximity graph of the configuration is displayed in Figure \ref{F2}. Moreover, the multiplicity sequences are
$${\bf m}_{\Omega_{S_1}}=(3,1,1,1,1,1,1,1,1),$$
$${\bf m}_{\Omega_{S_2}}=(2,1,1,1,1,1,1),$$
$${\bf m}_{\Omega_{S_3}}=(1,1,1,1,1)$$
and, since the strict transforms of the line at infinity pass through $P_1$ and $P_2$, $d_{S_1}=4$, $d_{S_2}=3$, $d_{S_3}=2$ and $I_{S_i}=-1$ for all $i\in \{1,2,3\}$. In addition the algorithm allows us to determine that
$${\mathcal L}_{d_i}(\Omega_{S_i},{\bf m}_{\Omega_{S_i}})=\{C_i\},$$
where $C_1$ (respectively, $C_2$, $C_3$) is the projective curve (having only one place at infinity) with equation $X^4-YZ^3=0$ (respectively, $X^3+YZ^2=0$, $Y^2+XZ=0$).
In fact, Algorithm \ref{alg1} returns the set $\{f_1,f_2,f_3\}$, where $f_1(x,y)=x^4-y$, $f_2(x,y)=x^3+y$ and $f_3(x,y)=y^2+x$.

Now, applying Theorem \ref{tDar}, one obtains that $f_1^{\pi} f_2 f_3^{\sqrt{2}}$ is a DPWAI first integral of ${\bf X}$.

\begin{figure}[ht!]
\centering
\setlength{\unitlength}{0.5cm}
\begin{picture}(26,10)
\qbezier[20](11,1)(9,2)(10,4)


\put(11,1){\circle*{0.3}}\put(10.2,0.8){\tiny{$P_1$}}
\put(11,1){\line(0,1){1}}

\put(11,2){\circle*{0.3}}\put(11.1,1.9){\tiny{$P_2$}}
\put(11,2){\line(0,1){1}}

\put(11,3){\circle*{0.3}}\put(10.1,2.8){\tiny{$P_3$}}
\put(11,3){\line(-1,1){1}}

\put(10,4){\circle*{0.3}}\put(10.3,3.8){\tiny{$P_4$}}
\put(10,4){\line(-1,1){1}}

\put(9,5){\circle*{0.3}}\put(9.3,4.8){\tiny{$P_5$}}
\put(9,5){\line(-1,1){1}}

\put(8,6){\circle*{0.3}}\put(8.3,5.8){\tiny{$P_6$}}
\put(8,6){\line(-1,1){1}}

\put(7,7){\circle*{0.3}}\put(7.3,6.8){\tiny{$P_7$}}
\put(7,7){\line(-1,1){1}}

\put(6,8){\circle*{0.3}}\put(6.3,7.8){\tiny{$P_8$}}
\put(6,8){\line(-1,1){1}}

\put(5,9){\circle*{0.3}}\put(5.3,8.8){\tiny{$P_9=S_1$}}






\put(19,1){\circle*{0.3}}\put(19.2,0.8){\tiny{$P_{14}$}}
\put(19,1){\line(0,1){1}}

\put(19,2){\circle*{0.3}}\put(19.2,1.8){\tiny{$P_{15}$}}
\put(19,2){\line(0,1){1}}

\put(19,3){\circle*{0.3}}\put(19.2,2.8){\tiny{$P_{16}$}}
\put(19,3){\line(0,1){1}}

\put(19,4){\circle*{0.3}}\put(19.2,3.8){\tiny{$P_{17}$}}
\put(19,4){\line(0,1){1}}

\put(19,5){\circle*{0.3}}\put(19.2,4.8){\tiny{$P_{18}=S_3$}}






\put(11,3){\line(1,1){1}}

\put(12,4){\circle*{0.3}}\put(12.2,3.8){\tiny{$P_{10}$}}
\put(12,4){\line(1,1){1}}

\put(13,5){\circle*{0.3}}\put(13.2,4.8){\tiny{$P_{11}$}}
\put(13,5){\line(1,1){1}}

\put(14,6){\circle*{0.3}}\put(14.2,5.8){\tiny{$P_{12}$}}
\put(14,6){\line(1,1){1}}

\put(15,7){\circle*{0.3}}\put(15.2,6.8){\tiny{$P_{13}=S_2$}}





\end{picture}

\caption{Proximity graph of the configuration obtained applying Algorithm \ref{alg1}.}
\label{F2}
\end{figure}
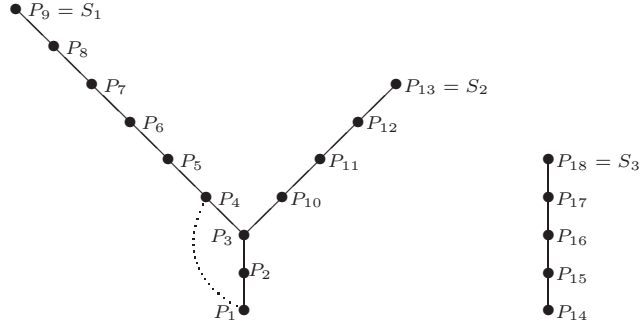

Finally, we notice that, with notations as in Theorem \ref{teorema2}, $$\delta_1=4+8\sqrt{2}, \; \delta_2=6\sqrt{2}+4\pi\;\; \mathrm{and}\;\; \delta_3=6+8\pi.$$
Therefore, the proximity graph of the extended reduction of singularities over the line at infinity, ${\mathcal E}_{\infty}({\mathcal X})$, is obtained from that in Figure \ref{F2} by adding three infinite chains  ${\mathcal J}_1$, ${\mathcal J}_2$ and ${\mathcal J}_3$ over $S_1$, $S_2$ and $S_3$ such that the proximity graph of ${\mathcal J}_1\cup \{S_1\}$ (respectively,  ${\mathcal J}_2\cup \{S_2\}$, ${\mathcal J}_3\cup \{S_3\}$) is ${\bf Prox}(\frac{4+8\sqrt{2}}{\pi})$ (respectively, ${\bf Prox}({6\sqrt{2}+4\pi})$, ${\bf Prox}\left(\frac{6+8\pi}{\sqrt{2}}\right)$). To complete our example we show, in Figure \ref{jota1}, the bottom part of the proximity graph of the chain ${\mathcal J}_1\cup \{S_1\}$, without labels.  A similar procedure provides the remaining chains.

\begin{figure}[ht!]
\centering
\setlength{\unitlength}{0.5cm}
\begin{picture}(26,14)
\qbezier[60](11,5)(9,8)(11,12)
\qbezier[20](11,4)(10,5)(11,6)
\qbezier[10](11,11)(10,11.5)(10,12)


\put(11,1){\circle*{0.3}}
\put(11,1){\line(0,1){1}}

\put(11,2){\circle*{0.3}}
\put(11,2){\line(0,1){1}}

\put(11,3){\circle*{0.3}}
\put(11,3){\line(0,1){1}}

\put(11,4){\circle*{0.3}}
\put(11,4){\line(0,1){1}}

\put(11,5){\circle*{0.3}}
\put(11,5){\line(0,1){1}}

\put(11,6){\circle*{0.3}}
\put(11,6){\line(0,1){1}}

\put(11,7){\circle*{0.3}}
\put(11,7){\line(0,1){1}}

\put(11,8){\circle*{0.3}}
\put(11,8){\line(0,1){1}}

\put(11,9){\circle*{0.3}}
\put(11,9){\line(0,1){1}}

\put(11,10){\circle*{0.3}}\put(11,10){\line(0,1){1}}

\put(11,11){\circle*{0.3}}
\put(11,11){\line(0,1){1}}

\put(11,12){\circle*{0.3}}

\put(11,12.5){$\vdots$}

\end{picture}
\caption{Proximity graph of the chain ${\mathcal J}_1\cup \{S_1\}$}
\label{jota1}
\end{figure}
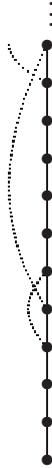

}
\end{exam}

\end{document}